\newtheorem{thm}{Theorem}[section]
\newtheorem{lem}[thm]{Lemma}
\theoremstyle{definition}
\begin{document}
\date{}

\title{$d$-Degree Erd\H{o}s-Ko-Rado theorem for finite vector spaces\footnote{Supported by  National Natural Science Foundation of China (12171028, 12371326).}}

\author{
 {\small Yunjing  Shan,}  {\small Junling  Zhou}\\
{\small School of Mathematics and Statistics}\\ {\small Beijing Jiaotong University}\\
  {\small Beijing  100044, China}\\
 {\small jlzhou@bjtu.edu.cn}\\
}

\maketitle

\begin{abstract}
Let $V$ be an $n$-dimensional vector space over the finite field $\mathbb{F}_{q}$ and let $\left[V\atop k\right]_q$ denote the family of all $k$-dimensional subspaces of $V$. A family $\mathcal{F}\subseteq \left[V\atop k\right]_q$ is called intersecting if for all $F$, $F'\in\mathcal{F}$, we have ${\rm dim}$$(F\cap F')\geq 1$. Let $\delta_{d}(\mathcal{F})$ denote the minimum degree in $\mathcal{F}$ of all $d$-dimensional subspaces.  In this paper we show that $\delta_{d}(\mathcal{F})\leq \left[n-d-1\atop k-d-1\right]$ in any intersecting family $\mathcal{F}\subseteq \left[V\atop k\right]_q$, where $k>d\geq 2$ and $n\geq 2k+1$.
\end{abstract}

{\bf  Key words}\  \ \  Erd\H{o}s-Ko-Rado theorem  \ \  $d$-degree \ \ $q$-Kneser graph  \ \  vector space \ \

\section{Introduction}

Let $X$ be an $n$-element set and let $\tbinom{X}{k}$ denote the set of all $k$-element subsets of $X$. A family $\mathcal{F}\subseteq \tbinom{X}{k}$ is called {\it intersecting} if for all $F$, $F'\in\mathcal{F}$, we have $|F\cap F'|\geq 1$. For a family $\mathcal{F}\subseteq \tbinom{X}{k}$ and a fixed subset $S\subseteq X$, let $$d_{S}(\mathcal{F}):=|\{F\in\mathcal{F}:S\subseteq F\}|$$ denote the $S$-degree of $\mathcal{F}$. For a fixed positive integer $d\leq k$,
$$\delta_{d}(\mathcal{F})=\min\left\{d_{S}(\mathcal{F}):|S|=d, S\subseteq X\right\}$$
denote the {\it minimum $d$-degree} of $\mathcal{F}$.

One of the basic results in extremal set theory is the well-known Erd\H{o}s-Ko-Rado theorem. The Erd\H{o}s-Ko-Rado theorem states that if $n\geq 2k+1$ and $\mathcal{F}\subseteq \tbinom{X}{k}$ is an intersecting family, then $|\mathcal{F}|\leq\tbinom{n-1}{k-1}$ with equality holding if and only if $$\mathcal{F}=\left\{F\in\tbinom{X}{k}:y\in F\right\}$$ for some $y\in X.$ Huang and Zhao \cite{deg} established the following degree analogue of the Erd\H{o}s-Ko-Rado theorem.

\begin{thm}\label{1}{\rm(\cite[Theorem 1.1]{deg})}
Let $k, n$ be positive integers with $n\geq 2k+1$. Suppose $\mathcal{F}\subseteq \tbinom{X}{k}$ is an intersecting family. Then
\begin{equation*}
\delta_{1}(\mathcal{F})\leq \tbinom{n-2}{k-2}.
\end{equation*}
Moreover, equality holds if and only if $\mathcal{F}=\left\{F\in\tbinom{X}{k}: x\in F\right\}$ for some $x\in X$.
\end{thm}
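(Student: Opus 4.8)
The plan is to locate a light element by working at a vertex of \emph{maximum} degree. Write $\Delta=\max_{x\in X}d_{\{x\}}(\mathcal F)$ and fix $x_{0}$ with $d_{\{x_0\}}(\mathcal F)=\Delta$. Split $\mathcal F$ according to $x_0$: let $\mathcal B=\{F\in\mathcal F:x_0\notin F\}$ with $b=|\mathcal B|$, and let $\mathcal M=\{S\in\binom{X}{k}:x_0\in S,\ S\notin\mathcal F\}$ be the family of $x_0$-sets \emph{absent} from $\mathcal F$, with $t=|\mathcal M|$. For $y\neq x_0$ put $d_y^{\mathcal B}=|\{F\in\mathcal B:y\in F\}|$ and $t_y=|\{S\in\mathcal M:y\in S\}|$. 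Counting the $k$-sets through $y$ by whether they contain $x_0$ gives the identity
\begin{equation*}
d_{\{y\}}(\mathcal F)=\binom{n-2}{k-2}-t_y+d_y^{\mathcal B},
\end{equation*}
so $d_{\{y\}}(\mathcal F)\le\binom{n-2}{k-2}$ is equivalent to $d_y^{\mathcal B}\le t_y$. Since each $F\in\mathcal B$ has its $k$ elements in $X\setminus\{x_0\}$ and each $S\in\mathcal M$ has exactly $k-1$ such elements, summing over $y\neq x_0$ yields $\sum_y d_y^{\mathcal B}=kb$ and $\sum_y t_y=(k-1)t$.

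Thus everything reduces to one inequality, which I will call the Key Lemma: $kb\le(k-1)t$. Indeed, once this is known, $\sum_{y\neq x_0}(d_y^{\mathcal B}-t_y)=kb-(k-1)t\le0$, so some $y$ satisfies $d_y^{\mathcal B}\le t_y$, i.e. $d_{\{y\}}(\mathcal F)\le\binom{n-2}{k-2}$, whence $\delta_1(\mathcal F)\le\binom{n-2}{k-2}$. Equivalently the Key Lemma reads $k|\mathcal F|-\Delta\le(k-1)\binom{n-1}{k-1}$, and I note that the full star gives equality here, so nothing weaker than the maximum-degree choice of $x_0$ can work: plain averaging ($\delta_1(\mathcal F)\le k|\mathcal F|/n$) combined with the Erd\H{o}s--Ko--Rado bound $|\mathcal F|\le\binom{n-1}{k-1}$, or even with the Hilton--Milner bound, falls short near $n=2k+1$ precisely because the extremal family has one very heavy vertex that the averaging must be allowed to discard.

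To attack the Key Lemma I would double count disjoint pairs. Because $\mathcal F$ is intersecting, every $S\in\binom{X}{k}$ with $x_0\in S$ and $S\cap F=\emptyset$ for some $F\in\mathcal B$ must lie in $\mathcal M$; a fixed $F\in\mathcal B$ admits exactly $\binom{n-1-k}{k-1}$ such $S$ (choose the $k-1$ points of $S\setminus\{x_0\}$ from $X\setminus(\{x_0\}\cup F)$, using $n\ge2k+1$). Hence
\begin{equation*}
b\binom{n-1-k}{k-1}=\big|\{(F,S):F\in\mathcal B,\ S\in\mathcal M,\ x_0\in S,\ F\cap S=\emptyset\}\big|=\sum_{S\in\mathcal M}\beta_S,
\end{equation*}
where $\beta_S=|\{F\in\mathcal B:F\cap S=\emptyset\}|$, and the Key Lemma follows once the average of $\beta_S$ over $S\in\mathcal M$ is shown to be at most $\tfrac{k-1}{k}\binom{n-1-k}{k-1}$. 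Equivalently, passing to complements $\mathcal B^{c}=\{(X\setminus\{x_0\})\setminus F:F\in\mathcal B\}$, one has $t\ge|\partial_{k-1}(\mathcal B^{c})|$, the number of $(k-1)$-sets lying below $\mathcal B^c$, and the Kruskal--Katona theorem lower-bounds this iterated shadow in terms of $b$.

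The hard part will be that Kruskal--Katona alone is not enough: for $b$ as large as the Erd\H{o}s--Ko--Rado value $\binom{n-2}{k-1}$ the shadow estimate only gives $t\ge\binom{n-2}{k-1}$, short of the required $\tfrac{k}{k-1}b$. This is where the maximality of $d_{\{x_0\}}(\mathcal F)$ must re-enter: if $\mathcal B$ were a near-maximal star, say at $z_0$, then $z_0$ would outrank $x_0$ in degree, a contradiction, so the maximum-degree hypothesis caps $b$ well below $\binom{n-2}{k-1}$ and forces $t$ up. Quantifying this trade-off---running the shadow bound and the inequalities $d_{\{z\}}(\mathcal F)\le\Delta$ ($z\neq x_0$) simultaneously so that they close at exactly $kb=(k-1)t$---is the crux of the argument. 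Finally, for the equality case I would trace the two inequalities backwards: $\delta_1(\mathcal F)=\binom{n-2}{k-2}$ forces $d_y^{\mathcal B}=t_y$ for every $y\neq x_0$, hence $d_{\{y\}}(\mathcal F)=\binom{n-2}{k-2}$ for all such $y$; a short argument then shows that this, together with the intersecting property, is possible only when $b=0$ and every $k$-set through $x_0$ is present, i.e. $\mathcal F$ is the full star at $x_0$.
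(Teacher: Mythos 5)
Your bookkeeping is correct and the reduction is clean: the identity $d_{\{y\}}(\mathcal F)=\binom{n-2}{k-2}-t_y+d_y^{\mathcal B}$, the sums $\sum_y d_y^{\mathcal B}=kb$ and $\sum_y t_y=(k-1)t$, and the deduction of the theorem from the Key Lemma $kb\le(k-1)t$ are all fine. But the Key Lemma is never proved, and it carries the entire content of the theorem: it is equivalent to $\Delta+\tfrac{k}{k-1}\,b\le\binom{n-1}{k-1}$, a weighted strengthening of Erd\H{o}s--Ko--Rado, and you yourself flag its proof as ``the crux'' before stopping. Worse, the tools you propose cannot close it in the regime that matters. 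At $n=2k+1$ the Key Lemma holds with \emph{equality} not only for the star but also for the family $\mathcal A=\{F:|F\cap T|\ge 2\}$ with $|T|=3$: for $k=3$, $n=7$ one gets $\Delta=9$, $b=4$, and $9+\tfrac32\cdot 4=15=\binom{6}{2}$; for $k=4$, $n=9$ one gets $\Delta=36$, $b=15$, and $36+\tfrac43\cdot 15=56=\binom{8}{3}$. So any proof must be exactly tight at two structurally different families, and a Kruskal--Katona shadow bound (tight for compressed families of yet another shape) combined with a loose use of degree-maximality has no mechanism to achieve that. The specific heuristic you lean on is also unsound as stated: maximality of $d_{\{x_0\}}$ does not cap $b$ by star-type considerations, since $b$ can even exceed $\Delta$ --- for the Fano plane ($k=3$, $n=7=2k+1$, which is inside the theorem's range) one has $\Delta=3$ and $b=4$. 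Finally, the equality characterization rests on the unproven lemma plus an unspecified ``short argument,'' and would additionally have to rule out non-star configurations at the tight point $n=2k+1$, where the example $\mathcal A$ shows the averaging step alone cannot distinguish the extremal family.

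For comparison: the paper does not prove this statement at all --- it is quoted from Huang and Zhao --- and the method the paper actually deploys for such degree-EKR results (its proof of Theorem~\ref{4}, following Huang--Zhang) is spectral, not combinatorial. One inequality comes from a Hoffman-type bound, expanding the indicator vector of $\mathcal F$ along the eigenspaces of the ($q$-)Kneser graph and using that an intersecting family is an independent set; a second comes from double counting pairs of disjoint $d$-spaces (here $d=1$: pairs of distinct points) under the assumption $\delta_d(\mathcal F)>\binom{n-d-1}{k-d-1}$; combining them yields a quadratic inequality forcing $|\mathcal F|>\binom{n-1}{k-1}$, contradicting EKR. Your route is genuinely different and would be interesting if completed, but as it stands it is a plan with its central inequality --- and hence the theorem --- unestablished.
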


%Bounds for an $r$-cover-free family without block size restriction have been studied by several researchers. Sperner's theorem in \cite{sper} states that for a $1$-cover-free family $\mathcal{F}$,
%\begin{equation*}
%|\mathcal{F}| \leq \tbinom{n}{\lfloor\frac{n}{2}\rfloor}
%\end{equation*}
%for all positive integers $n\geq2$. This bound is tight.
For $d\geq 2$, Kupavskii \cite{d1} confirmed this speculation that $\delta_{d}(\mathcal{F})\leq \tbinom{n-d-1}{k-d-1}$ for sufficiently large $n$. Huang and Zhang \cite{d2} used techniques from spectral graph theory to improve the range of $n$ significantly.
\begin{thm}\label{dd2}{\rm(\cite[Theorem 1.1]{d2})}
Let $n, k, d$ be positive integers with $k>d\geq 2$ and $n\geq 2k+2d-3$. Suppose $\mathcal{F}\subseteq \tbinom{X}{k}$ is an intersecting family. Then
\begin{equation*}
\delta_{d}(\mathcal{F})\leq \tbinom{n-d-1}{k-d-1}.
\end{equation*}
\end{thm}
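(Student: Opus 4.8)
The plan is to argue by contradiction through a spectral/weighted-counting argument on the Kneser graph $K(n,k)$, whose vertices are the $k$-subsets of $X$, with two sets adjacent iff disjoint; its independent sets are exactly the intersecting families. Set $b:=\binom{n-d-1}{k-d-1}$ and suppose, for contradiction, that every $d$-set $S$ satisfies $d_S(\mathcal{F})\ge b+1$. I record the spectral data I will use: $K(n,k)$ is $\binom{n-k}{k}$-regular, its least eigenvalue is $-\binom{n-k-1}{k-1}$ and lies in the first nontrivial eigenspace $V_1$ of the Johnson scheme, and the containment matrix $N$ with $N_{S,F}=\mathbf{1}[S\subseteq F]$ intertwines the scheme, carrying each eigenspace $V_j$ on $k$-sets to the corresponding one on $d$-sets. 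Unwinding Erd\H{o}s--Ko--Rado via the Hoffman ratio bound recovers $|\mathcal{F}|\le\binom{n-1}{k-1}$, and the whole strategy is to upgrade this to a degree statement.

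Writing $\mathbf{x}=\mathbf{1}_{\mathcal{F}}$ so that $(N\mathbf{x})_S=d_S(\mathcal{F})$, I would test the degree hypothesis against a nonnegative weight $\mathbf{y}$ on $d$-sets: the hypothesis gives $\sum_S y_S\,d_S(\mathcal{F})\ge(b+1)\sum_S y_S$, while the left-hand side equals $\sum_{F\in\mathcal{F}}z_F$ with $z_F=\sum_{S\subseteq F}y_S=(N^{\mathsf T}\mathbf{y})_F$. The natural test weight is $y_S=\mathbf{1}[x\notin S]$ for a distinguished point $x$, since these are exactly the $d$-sets on which a star centered at $x$ attains its minimum degree $b$. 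Then $z_F=\binom{k-1}{d}$ if $x\in F$ and $z_F=\binom{k}{d}$ if $x\notin F$, so $\mathbf{z}$ lies in $V_0\oplus V_1$ and is aligned with the star's eigenvector $\chi_x$, with $\sum_S y_S=\binom{n-1}{d}$. Choosing $x$ to be a point of maximum $1$-degree in $\mathcal{F}$ forces $\mathcal{F}$ to concentrate near $x$ and rules out the ``star centered elsewhere'' configuration that would otherwise defeat a fixed test point. This reduces everything to upper-bounding $\sum_{F\in\mathcal{F}}z_F$ — a weighted, $x$-free--heavy independence problem on $K(n,k)$ — and comparing it with $(b+1)\binom{n-1}{d}$.

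The engine for the upper bound would be a sharp weighted ratio bound, or equivalently Cauchy interlacing against the two-part quotient of $K(n,k)$ induced by the partition $\{F:x\in F\}\cup\{F:x\notin F\}$. Because $\mathbf{z}\in V_0\oplus V_1$ and $V_1$ carries the least eigenvalue, one expects the weighted independence number to be certified by the star at $x$, with extremal value $\binom{k-1}{d}\binom{n-1}{k-1}$; the final step is then the elementary verification that $\binom{k-1}{d}\binom{n-1}{k-1}<(b+1)\binom{n-1}{d}$, delivering the contradiction.

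The hard part will be exactly the sharpness of this weighted bound. Naive relaxations already fail at the boundary: bounding $|\mathcal{F}|$ and the number of $x$-free members of $\mathcal{F}$ separately by Erd\H{o}s--Ko--Rado, or using only the average-degree lower bound $d_x(\mathcal{F})\ge\tfrac{k}{n}|\mathcal{F}|$, both overshoot $(b+1)\binom{n-1}{d}$ right at $n=2k+2d-3$ (e.g.\ for $n=7,k=3,d=2$), so the argument genuinely needs the exact least eigenvalue of $K(n,k)$ together with a stability input forcing near-extremal intersecting families to be essentially stars. It is precisely in controlling this $x$-free--heavy weighted extremal problem — equivalently, in proving that the star centered at a maximum-degree point dominates every competing intersecting family for this weighting — that the hypothesis $n\ge 2k+2d-3$ is consumed, and extracting the clean threshold rather than a ``sufficiently large $n$'' is the crux that the spectral method is designed to resolve.
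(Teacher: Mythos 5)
There is a genuine gap, and it sits exactly where you placed it: the ``sharp weighted ratio bound'' certifying the star at $x$ is not proved, and in the pure spectral form you propose it is actually \emph{false}. With $y_S=\mathbf{1}[x\notin S]$ the induced weight is $z_F=\binom{k-1}{d}$ if $x\in F$ and $z_F=\binom{k}{d}$ if $x\notin F$, and the star at $x$ has total weight $\binom{k-1}{d}\binom{n-1}{k-1}=\binom{n-d-1}{k-d-1}\binom{n-1}{d}$, which is your target bound $b\binom{n-1}{d}$. But the star centered at any other point $y\neq x$ is also an intersecting family, and its weight is $\binom{k-1}{d}\binom{n-2}{k-2}+\binom{k}{d}\binom{n-2}{k-1}$, exceeding the star-at-$x$ value by $\binom{k-1}{d-1}\binom{n-2}{k-1}>0$ (for $n=7,k=3,d=2$: weight $35$ versus $30=(b+1)\binom{n-1}{d}$). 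So the weighted independence number of $K(n,k)$ for this weighting is \emph{not} attained by the star at $x$, and no interlacing or ratio-bound argument --- which is blind to how $x$ was chosen --- can certify it. Since $\mathbf{z}\in V_0\oplus V_1$ and $V_1$ contains the least eigenvalue, Hoffman-type arguments control only the total mass $\sum_{i\geq 1}\|\mathbf{x}_i\|^2$ of the projections, not the signed inner product $\langle \mathbf{z}_1,\mathbf{x}_1\rangle$, which is exactly what the $y$-star exploits. To rescue the argument you must use the coupling ``$x$ has maximum $1$-degree in $\mathcal{F}$'' quantitatively, i.e.\ a stability theorem forcing near-extremal families to be near-stars; you acknowledge this but do not supply it, so the entire difficulty of the theorem is deferred to an unproved crux. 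Moreover, this stability route is essentially Kupavskii's, which is known to yield only ``$n$ sufficiently large,'' not the threshold $n\geq 2k+2d-3$ claimed here.

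For comparison, the proof of Huang and Zhang (which the present paper adapts to the $q$-analog in Section 3) avoids stability and any distinguished point entirely. It derives two complementary inequalities in the squared norms $\|\vec{h}_0\|^2,\dots,\|\vec{h}_d\|^2$ of the eigenspace projections of $\mathbf{1}_{\mathcal{F}}$: first, a Hoffman-type inequality from $\vec{h}^{T}A\vec{h}=0$, after bounding the tail eigenvalues $\lambda_{d+1},\dots,\lambda_k$ uniformly (the analogue of Lemma \ref{l200}); second, an inequality in the \emph{opposite} direction from the degree hypothesis, obtained by expanding $0<\sum_{S\cap T=\emptyset}\bigl(d_S(\mathcal{F})-b\bigr)\bigl(d_T(\mathcal{F})-b\bigr)$ through the inclusion matrices, using that $W_{d,k}^{T}\overline{W}_{d,d}W_{d,k}$ acts as a scalar on each $U_j$ (Lemmas \ref{l4} and \ref{l8}). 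Eliminating $\|\vec{h}_1\|^2$ between the two, discarding $\|\vec{h}_i\|^2$ for $2\leq i\leq d$ after sign checks (Lemma \ref{c4}), and substituting $\|\vec{h}_0\|^2=|\mathcal{F}|^2/\binom{n}{k}$ yields a quadratic in $|\mathcal{F}|$ that factors as $\bigl(|\mathcal{F}|-\binom{n-1}{k-1}\bigr)\bigl(|\mathcal{F}|-\binom{n}{d}b/\binom{k}{d}\bigr)>0$; since the degree hypothesis already forces $|\mathcal{F}|>\binom{n}{d}b/\binom{k}{d}$, one concludes $|\mathcal{F}|>\binom{n-1}{k-1}$, contradicting Erd\H{o}s--Ko--Rado. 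That second quadratic inequality --- testing the degree hypothesis against itself over \emph{pairs} of disjoint $d$-sets rather than against a fixed linear weight --- is the idea your proposal is missing, and it is what makes the sharp range of $n$ attainable without stability.
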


The problems in extremal set theory have natural extensions to families of subspaces over a finite field. Throughout the paper we always let $V$ be an $n$-dimensional vector space over the finite field $\mathbb{F}_{q}$. Let $\left[V\atop k\right]_q$ denote the family of all $k$-dimensional subspaces of $V$. For $m\in \mathbb{R}, k\in \mathbb{Z}^{+}$, define the Gaussian binomial coefficient by
$$\left[m\atop k\right]_{q}:=\prod\limits_{i=0}^{k-1}\dfrac{q^{m-i}-1}{q^{k-i}-1}.$$
Obviously, the size of $\left[V\atop k\right]_q$ is $\left[n\atop k\right]_{q}$. If $k=0$, let $\left[m\atop k\right]_{q}:=1$; if $k\in \mathbb{Z}^{-}$, let $\left[m\atop k\right]_{q}:=0$. If $k$ and $q$ are fixed, then $\left[m\atop k\right]_{q}$ is a continuous function of $m$ which is positive and strictly increasing when $m\geq k$. If there is no ambiguity, the subscript $q$ can be omitted.

We denote $S\leq T$ if $S$ is a subspace of $T$. For any two subspaces $S$, $T\leq V$, let $S+T$ denote the linear span of $S\cup T$ and let $S\oplus T$ denote the direct sum of $S$ and $T$ with $S\cap T=\{\textup{\textbf{0}}\}$. Let $\mathcal{F}\subseteq \left[V\atop k\right]$ be a family of subspaces, we say that $\mathcal{F}$ is {\it intersecting} if for all $F$, $F'\in\mathcal{F}$, we have ${\rm dim}$$(F\cap F')\geq 1$. For a family $\mathcal{F}\subseteq \left[V\atop k\right]$ and a fixed subspace $S$, let
\begin{equation*}
d_{S}(\mathcal{F}):=|\{F\in\mathcal{F}:S\leq F\}|
\end{equation*}
denote the {\it $S$-degree} of $\mathcal{F}$. For a fixed positive integer $d\leq k$, $$\delta_{d}(\mathcal{F})=\min\left\{d_{S}(\mathcal{F}):{\rm dim}(S)=d, S\leq V\right\}$$
denote the {\it minimum $d$-degree} of $\mathcal{F}$.

%While many results about sets have been generalized to vector spaces, not so much is known about their $q$-analogs because adapting combinatorial techniques to vector spaces can be challenging.

Frankl and Tokushige \cite{vd} proved a vector space version of Theorem \ref{1} along the same line as Huang and Zhao's proof in \cite{deg}.

\begin{thm}\label{3}{\rm(\cite[Theorem 2]{vd})}
Let $k, n$ be positive integers with $n\geq 2k+1$. Suppose $\mathcal{F}\subseteq \left[V\atop k\right]$ is an intersecting family. Then
\begin{equation*}
\delta_{1}(\mathcal{F})\leq \left[n-2\atop k-2\right].
\end{equation*}
Moreover, equality holds if and only if $\mathcal{F}=\left\{F\in \left[V\atop k\right]: E\leq F\right\}$ for some $E\in \left[V\atop 1\right]$.
\end{thm}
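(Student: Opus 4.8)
The goal is to exhibit a single $1$-dimensional subspace of degree at most $\left[n-2\atop k-2\right]$, since $\delta_{1}(\mathcal{F})$ is by definition the minimum of $d_{E}(\mathcal{F})$ over all $E\in\left[V\atop 1\right]$. I would fix $E_{0}$ to be a $1$-subspace of \emph{minimum} degree and try to prove $d_{E_{0}}(\mathcal{F})\leq\left[n-2\atop k-2\right]$. Write $\mathcal{F}_{E_{0}}=\{F\in\mathcal{F}:E_{0}\leq F\}$ and $\mathcal{F}^{E_{0}}=\{F\in\mathcal{F}:E_{0}\not\leq F\}$, so that $d_{E_{0}}(\mathcal{F})=|\mathcal{F}_{E_{0}}|$. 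A first observation is that $\mathcal{F}^{E_{0}}\neq\emptyset$: from $\sum_{E}d_{E}(\mathcal{F})=\left[k\atop 1\right]|\mathcal{F}|$ and $\left[n\atop 1\right]>\left[k\atop 1\right]$ the minimum degree is strictly below $|\mathcal{F}|$, so not every member can contain $E_{0}$.

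The key step is to pass to the quotient space $W=V/E_{0}\cong\mathbb{F}_{q}^{\,n-1}$. Set $\mathcal{A}=\{F/E_{0}:F\in\mathcal{F}_{E_{0}}\}$, a family of $(k-1)$-subspaces of $W$ with $|\mathcal{A}|=d_{E_{0}}(\mathcal{F})$, and $\mathcal{B}=\{(G+E_{0})/E_{0}:G\in\mathcal{F}^{E_{0}}\}$, a family of $k$-subspaces of $W$. Using $E_{0}\cap G=\{0\}$ for $G\in\mathcal{F}^{E_{0}}$ and lifting all intersections back to $V$, one checks that $\mathcal{B}$ is intersecting in $W$ and that $\mathcal{A}$ and $\mathcal{B}$ are \emph{cross-intersecting}, i.e.\ every member of $\mathcal{A}$ meets every member of $\mathcal{B}$. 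Since $n\geq 2k+1$ gives $\dim W=n-1\geq 2k$, the vector-space Erd\H{o}s-Ko-Rado theorem applies in $W$ and yields $|\mathcal{B}|\leq\left[n-2\atop k-1\right]$, the bound being attained only by a star in $W$.

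The main obstacle is the final estimate $|\mathcal{A}|\leq\left[n-2\atop k-2\right]$, and here one must be careful. The family $\mathcal{A}$ need \emph{not} be intersecting, since two members of $\mathcal{F}_{E_{0}}$ may meet only in $E_{0}$, so one cannot simply bound $|\mathcal{A}|$ by Erd\H{o}s-Ko-Rado in $W$; moreover the standard cross-intersecting inequalities bound the family of larger uniformity, namely $\mathcal{B}$, rather than $\mathcal{A}$, and a naive averaging of the degrees $d_{E}(\mathcal{F})$ only yields a bound exceeding $\left[n-2\atop k-2\right]$, so averaging alone is insufficient and the structural input above is essential. The resolution I would pursue is a coupled argument: the minimality of $d_{E_{0}}(\mathcal{F})$ forces $\mathcal{F}^{E_{0}}$, and hence $\mathcal{B}$, to be large (otherwise $\mathcal{F}$ is concentrated in the star through $E_{0}$, making $d_{E_{0}}$ nearly maximal and contradicting the choice of $E_{0}$), and this lower bound on $|\mathcal{B}|$ can be fed into a sharp two-family cross-intersecting inequality in $W$ to pin $|\mathcal{A}|\leq\left[n-2\atop k-2\right]$. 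For the equality characterization I would run the extremal cases backwards: equality forces $\mathcal{B}$ to be a maximal star in $W$ and $\mathcal{A}$ to consist exactly of the $(k-1)$-subspaces through the corresponding direction, and lifting that common direction to $V$ identifies $\mathcal{F}$ as $\{F\in\left[V\atop k\right]:E\leq F\}$ for a single $E\in\left[V\atop 1\right]$.
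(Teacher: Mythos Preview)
The paper does not prove this theorem; it quotes it from Frankl--Tokushige \cite{vd} and remarks that their argument follows the spectral (eigenvalue) method of Huang--Zhao. So there is no ``paper's own proof'' to compare against beyond that description, and your route via the quotient $W=V/E_{0}$ and cross-intersecting families is a genuinely different, purely combinatorial strategy.

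That said, your proposal has a real gap at the point you yourself flag. After setting up $\mathcal{A}\subseteq\left[W\atop k-1\right]$ and $\mathcal{B}\subseteq\left[W\atop k\right]$ cross-intersecting with $\mathcal{B}\neq\emptyset$, you need $|\mathcal{A}|\leq\left[n-2\atop k-2\right]$. You propose to ``feed a lower bound on $|\mathcal{B}|$ into a sharp two-family cross-intersecting inequality,'' but you never say which inequality, and no standard one delivers this. Cross-intersection with a single nonempty $\mathcal{B}$ only forces each $A\in\mathcal{A}$ to meet one fixed $k$-subspace, giving a bound far larger than $\left[n-2\atop k-2\right]$; the known product- and sum-type cross-intersecting bounds likewise do not convert ``$|\mathcal{B}|$ large'' into ``$|\mathcal{A}|\leq\left[n-2\atop k-2\right]$'' at this level of sharpness. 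Your appeal to the minimality of $E_{0}$ is the right instinct, but the quantitative link ``$d_{E_{0}}$ minimal $\Rightarrow$ $|\mathcal{B}|$ large $\Rightarrow$ $|\mathcal{A}|$ small enough'' is exactly the hard step, and it is where Frankl--Tokushige (and this paper, for general $d$) bring in the eigenvalues of the $q$-Kneser graph to close the estimate. As written, the argument is an outline with the decisive inequality missing; you would need either to exhibit the specific cross-intersecting theorem you have in mind and verify it gives the exact constant, or to supply a direct argument for $|\mathcal{A}|\leq\left[n-2\atop k-2\right]$. A minor additional point: when $n=2k+1$ you have $\dim W=2k$, so the uniqueness part of Erd\H{o}s--Ko--Rado in $W$ is not available, which would also need care in your equality analysis.
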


In this paper, we will generalize Theorem~\ref{dd2} to vector spaces using spectral graph theory. In the process of our proof we adapt Huang and Zhang's  original proof of  Theorem~\ref{dd2} for sets to the $q$-analog version. The main result is as follows.
\begin{thm}\label{4}
Let $n, k, d$ be positive integers with $k>d\geq 2$ and $n\geq 2k+1$. Suppose $\mathcal{F}\subseteq \left[V\atop k\right]$ is an intersecting family. Then
\begin{equation*}
\delta_{d}(\mathcal{F})\leq \left[n-d-1\atop k-d-1\right].
\end{equation*}
\end{thm}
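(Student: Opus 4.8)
The plan is to argue by contradiction through the spectral structure of the Grassmann association scheme on $\left[V\atop k\right]$. Suppose, against the claim, that every $d$-dimensional subspace $S\leq V$ has $d_{S}(\mathcal{F})\geq\left[n-d-1\atop k-d-1\right]+1$. Let $m=\left[n-d-1\atop k-d-1\right]$, let $f=\mathbf{1}_{\mathcal{F}}$ be the characteristic vector indexed by $\left[V\atop k\right]$, and let $A$ be the adjacency matrix of the $q$-Kneser graph, whose edges are the pairs $F,F'$ with $\dim(F\cap F')=0$. Since $\mathcal{F}$ is intersecting it is independent in this graph, so $f^{\top}Af=0$. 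A first, purely local, observation records the qualitative mechanism: for fixed $S$ the degree $d_{S}(\mathcal{F})$ is exactly the size of the link $\{F/S:S\leq F\in\mathcal{F}\}$, a family of $(k-d)$-subspaces of the $(n-d)$-dimensional quotient $V/S$, and since $n\geq 2k+1$ gives $n-d\geq 2(k-d)+1$, the Erd\H{o}s-Ko-Rado bound in $V/S$ shows that $d_{S}(\mathcal{F})>m$ forces this link to be non-intersecting. By itself this is too weak, because the averaged degree bound contradicts the Erd\H{o}s-Ko-Rado size bound only by a second-order margin; the real content is spectral.

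For the quantitative step I would decompose $f=\sum_{j=0}^{k}f_{j}$ along the eigenspaces $V_{0},\dots,V_{k}$ of the scheme. Independence turns the relation $f^{\top}Af=0$ into $\sum_{j}P_{k}(j)\|f_{j}\|^{2}=0$, where $P_{k}(j)$ are the eigenvalues of the $q$-Kneser graph (the $q$-analogues of the Eberlein polynomials); here $P_{k}(0)$ is the valency and $P_{k}(1)<0$ is the eigenvalue responsible for the plain bound $\left[n-1\atop k-1\right]$. The degree hypothesis enters through the incidence operator $W$ between $d$- and $k$-dimensional subspaces, for which $W^{\top}f$ is the vector of $d$-degrees, so the hypothesis reads $W^{\top}f\geq(m+1)\mathbf{1}$ coordinatewise. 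Because $W$ is a morphism of the scheme it annihilates $f_{j}$ for $j>d$ and rescales each surviving $f_{j}$ by an explicit positive singular value; this identifies $\langle W^{\top}f,\mathbf{1}\rangle=|\mathcal{F}|\left[k\atop d\right]$ and links the deviation of the degree vector from constant to $\|f_{1}\|^{2},\dots,\|f_{d}\|^{2}$. Substituting these relations and $\|f_{0}\|^{2}=|\mathcal{F}|^{2}/\left[n\atop k\right]$ into the independence identity collapses everything to a single scalar inequality among Gaussian binomial coefficients.

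The crux, and the step I expect to be the main obstacle, is to show that this inequality is self-contradictory for all $n\geq 2k+1$. The tension is that a uniformly large minimum $d$-degree forces $W^{\top}f$ to be nearly constant, hence $\|f_{1}\|^{2}$ small, whereas $f^{\top}Af=0$ forces $\|f_{1}\|^{2}$ to be comparatively large (the intersecting family must concentrate like a single star, whose degree vector is visibly non-constant, since $d$-subspaces avoiding the center realise the value $m$ exactly). Quantifying this dichotomy reduces to a positivity statement about ratios of Gaussian binomials, essentially the comparison of $\left[k\atop 1\right]\!\left[n-d\atop 1\right]$ with $\left[n\atop 1\right]\!\left[k-d\atop 1\right]$ together with the eigenvalue quotient $-P_{k}(1)/P_{k}(0)$, and it is precisely here that the $q$-analogue is more forgiving than the set case: the faster, geometric growth of the Gaussian coefficients lets the inequality close already at $n\geq 2k+1$ rather than at the threshold $n\geq 2k+2d-3$ needed over $\mathbb{N}$. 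I would therefore devote the bulk of the argument to sharp estimates of these $q$-binomial ratios and to pinning down the singular values of $W$ on $V_{0}$ and $V_{1}$, the remaining steps being routine bookkeeping.
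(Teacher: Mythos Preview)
Your spectral setup is the same as the paper's: decompose $f=\mathbf{1}_{\mathcal{F}}$ along the eigenspaces of the Grassmann scheme, use $f^{\top}Af=0$ for the $q$-Kneser matrix, and bring in the incidence map $W_{d,k}$ to read the $d$-degrees as $W_{d,k}f$. Where your plan diverges from a working proof is in how the hypothesis $d_{S}(\mathcal{F})>m$ is converted into a second spectral inequality. You write that a large minimum $d$-degree ``forces $W^{\top}f$ to be nearly constant, hence $\|f_{1}\|^{2}$ small''. This is not what the hypothesis gives: a coordinatewise lower bound on $W_{d,k}f$ says nothing about its variance, and an extremal family could in principle have wildly unequal $d$-degrees all exceeding $m$. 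The paper extracts a usable inequality by a different device: since every $d_{S}(\mathcal{F})-m>0$, the \emph{off-diagonal} quadratic form
\[
\sum_{\substack{S,T\in\left[V\atop d\right]\\ S\cap T=\{\mathbf{0}\}}}\bigl(d_{S}(\mathcal{F})-m\bigr)\bigl(d_{T}(\mathcal{F})-m\bigr)
=(W_{d,k}f-m\mathbf{1})^{\top}\,\overline{W}_{d,d}\,(W_{d,k}f-m\mathbf{1})
\]
is strictly positive, where $\overline{W}_{d,d}$ is the ``trivially intersecting'' relation on $d$-subspaces. It is the presence of $\overline{W}_{d,d}$, expanded via $\overline{W}_{d,d}=\sum_{r}(-1)^{r}q^{\binom{r}{2}}W_{r,d}^{\top}W_{r,d}$, that produces a linear combination of $\|f_{0}\|^{2},\dots,\|f_{d}\|^{2}$ with explicit coefficients $b_{i}$ (their Lemma~2.7). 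This is the missing idea in your outline; the bare singular values of $W_{d,k}$ on $V_{0},V_{1}$ are not enough.

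A second point: once the two inequalities (one from $f^{\top}Af=0$, one from the quadratic form above) are in hand, you propose eliminating $\|f_{1}\|^{2}$ and treating the rest as ``routine bookkeeping'' using only $V_{0}$ and $V_{1}$. In fact the coefficients of $\|f_{2}\|^{2},\dots,\|f_{d}\|^{2}$ do not drop out; one must verify that after the elimination each of them has the correct sign, which the paper does in a separate, nontrivial lemma (their Lemma~A.5, reducing to a monotonicity estimate in Lemma~A.4). Only after that does the problem collapse to a quadratic in $|\mathcal{F}|$ that factors as $\bigl(|\mathcal{F}|-\left[n-1\atop k-1\right]\bigr)\bigl(|\mathcal{F}|-\left[n\atop d\right]\left[n-d-1\atop k-d-1\right]/\left[k\atop d\right]\bigr)>0$, whence the contradiction with Hsieh's theorem. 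Your closing intuition---that the $q$-binomial growth is why $n\ge 2k+1$ suffices here versus $n\ge 2k+2d-3$ for sets---is correct, but the place it is cashed in is precisely that sign analysis for $2\le i\le d$, not in the $V_{0},V_{1}$ computation.
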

It is obvious that the upper bound given by Theorem \ref{4} is attained by the family of all $k$-dimensional subspaces of $V$ that contain a fixed 1-dimensional subspace.

\section{ Preliminaries}
In this section, we recall some important conclusions and give the proofs of several technical lemmas which are essential in the following sections.

Hsieh\;\cite{ekr2} established a vector space analog of Erd\H{o}s-Ko-Rado theorem which asserts that if the dimension of $V$ is large enough, then the unique intersecting family in $\left[V\atop k\right]$ with maximum size consists of all $k$-dimensional subspaces containing a fixed 1-dimensional subspace.
\begin{thm}\label{BBC}{\rm(\cite[Theorem 4.4]{ekr2})}
Let $n\geq 2k+1$. Suppose $\mathcal{H}\subseteq \left[V\atop k\right]$ is an intersecting family. Then
\begin{equation*}
\big|\mathcal{H}\big|\leq\left[n-1\atop k-1\right].
 \end{equation*}
Moreover, equality holds if and only if $\mathcal{H}=\left\{H\in\left[V\atop k\right]:E\leq H\right\}$ for some $E\in\left[V\atop 1\right]$.
\end{thm}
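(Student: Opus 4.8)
The plan is to argue by contradiction through the spectrum of the $q$-Kneser graph. Let $\Gamma$ be the graph on $\left[V\atop k\right]$ in which two $k$-subspaces are adjacent exactly when they meet trivially; an intersecting family is precisely an independent set of $\Gamma$, so with $\mathbf{f}$ the characteristic vector of $\mathcal{F}$ and $A$ the adjacency matrix of $\Gamma$ we have $\mathbf{f}^{\mathsf T}A\mathbf{f}=0$. I would work inside the Grassmann association scheme, whose common eigenspaces $V_0,V_1,\dots,V_k$ diagonalise $A$ with eigenvalues $\lambda_0,\lambda_1,\dots,\lambda_k$; for $n\ge 2k$ the least eigenvalue is $\lambda_1$, corresponding to $V_1$. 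Decomposing $\mathbf{f}=\sum_i\mathbf{f}_i$ with $\mathbf{f}_i\in V_i$, the component $\mathbf{f}_0$ is determined by $|\mathcal{F}|$, and the independence relation reads $\sum_i\lambda_i\|\mathbf{f}_i\|^2=0$. Assume, for contradiction, that $d_S(\mathcal{F})\ge M+1$ for every $d$-subspace $S$, where $M=\left[n-d-1\atop k-d-1\right]$.

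The next step is to read the degree hypothesis through the incidence maps of the scheme. Let $W_{j,k}$ be the $(j,k)$-incidence matrix; it intertwines the Grassmann schemes on $j$- and $k$-subspaces, acting on $V_i$ as a positive scalar $\mu_i^{(j)}$ for $i\le j$ and annihilating $V_i$ for $i>j$. Hence the $d$-degree sequence $(d_S(\mathcal{F}))_{\dim S=d}=W_{d,k}\mathbf{f}$ has first moment $\sum_S d_S(\mathcal{F})=|\mathcal{F}|\left[k\atop d\right]$ and second moment $\sum_S d_S(\mathcal{F})^2=\sum_{i\le d}\mu_i^{(d)}\|\mathbf{f}_i\|^2$, so the low-order components $\|\mathbf{f}_1\|^2,\dots,\|\mathbf{f}_d\|^2$ measure exactly the spread of the $d$-degrees about their mean. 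I would then combine this with $\sum_i\lambda_i\|\mathbf{f}_i\|^2=0$ by means of a modified ratio bound: replace $A$ by $B=A+\sum_{i=1}^{d}(\Lambda-\lambda_i)E_i$, where $E_i$ is the orthogonal projection onto $V_i$ and $\Lambda=\min_{i>d}\lambda_i$, so that $B$ still fixes the all-ones direction with eigenvalue $\lambda_0$ but has every remaining eigenvalue at least $\Lambda$, a value strictly less negative than $\lambda_1$. Since $\mathbf{f}^{\mathsf T}B\mathbf{f}=\sum_{i=1}^d(\Lambda-\lambda_i)\|\mathbf{f}_i\|^2$, the Hoffman computation for $B$ becomes an inequality tying $|\mathcal{F}|$ and the improved eigenvalue $\Lambda$ to this low-order energy. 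The extremal case to keep in mind is that of perfectly balanced degrees, where $\mathbf{f}_1=\dots=\mathbf{f}_d=0$ and the bound collapses to $|\mathcal{F}|\le\left[n\atop k\right]\tfrac{-\Lambda}{\lambda_0-\Lambda}$; this should fall below the size $(M+1)\left[n\atop d\right]/\left[k\atop d\right]$ that the hypothesis forces from below via $|\mathcal{F}|\left[k\atop d\right]=\sum_S d_S(\mathcal{F})\ge(M+1)\left[n\atop d\right]$, producing the contradiction.

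The main obstacle is twofold. First, the hypothesis $d_S(\mathcal{F})\ge M+1$ is one-sided, so it does not by itself bound the variance terms $\|\mathbf{f}_i\|^2$ from above, and the modified ratio bound only bites once genuinely two-sided control is extracted; I expect this to require feeding the entrywise inequality $W_{d,k}\mathbf{f}\ge(M+1)\mathbf{1}$ (not merely its average) into a Delsarte-type certificate, or exploiting that a large low-order energy is itself incompatible with $\sum_i\lambda_i\|\mathbf{f}_i\|^2=0$ through the positive eigenvalues on the even eigenspaces. Second, and this is where the $q$-analogue must be handled with care, one has to pin down the eigenvalues $\lambda_i$ of the $q$-Kneser graph and the scalars $\mu_i^{(j)}$ explicitly, check that $\lambda_1,\dots,\lambda_d$ really are the $d$ smallest eigenvalues so that $\Lambda=\min_{i>d}\lambda_i$ is the correct improvement, and then verify that the resulting competition between Gaussian binomial coefficients closes over the whole range $n\ge 2k+1$ rather than the larger threshold of the set case. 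I expect this Gaussian-binomial estimation to be the technical heart of the argument.
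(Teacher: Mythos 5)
Your proposal does not prove the statement in question. The theorem here is Hsieh's vector-space Erd\H{o}s--Ko--Rado theorem, which the paper imports from \cite[Theorem 4.4]{ekr2} without proof: its only hypothesis is that $\mathcal{H}$ is intersecting, and its conclusion is the size bound $|\mathcal{H}|\leq\left[n-1\atop k-1\right]$ \emph{together with} the characterization of equality. Your argument instead opens with ``assume $d_{S}(\mathcal{F})\geq M+1$ for every $d$-subspace $S$, where $M=\left[n-d-1\atop k-d-1\right]$'' --- but there is no $d$ and no degree hypothesis in the statement; what you have sketched is the contradiction framework for the paper's \emph{main} theorem (Theorem \ref{4}), not for Theorem \ref{BBC}. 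Worse, this re-targeting is circular: in the paper the $d$-degree machinery you are reconstructing terminates precisely in an appeal to Theorem \ref{BBC} (the final contradiction is $|\mathcal{F}|>\left[n-1\atop k-1\right]$), so that machinery cannot be the foundation for the theorem itself. And even read on its own terms your sketch is incomplete by your own admission: the two ``obstacles'' you flag (extracting two-sided control from the one-sided degree hypothesis, and the Gaussian-binomial eigenvalue comparisons) are exactly what the paper's Lemmas \ref{l8}, \ref{l1}, \ref{l2} and the Appendix lemmas \ref{l200}--\ref{c4} are needed to supply, and you resolve neither.

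If you want a spectral proof of the stated theorem, the correct route along your lines is much simpler and needs none of the modified matrix $B$: the plain Hoffman ratio bound applied to the $q$-Kneser graph, whose least eigenvalue for $n\geq 2k$ is $\lambda_{1}=-q^{-k}\left[n-k-1\atop k-1\right]$ (in the normalization \eqref{e2}), gives $|\mathcal{H}|\leq\left[n\atop k\right]\cdot\frac{-\lambda_{1}}{\lambda_{0}-\lambda_{1}}=\left[n\atop k\right]\cdot\frac{q^{k}-1}{q^{n}-1}=\left[n-1\atop k-1\right]$, since $\lambda_{0}=\left[n-k\atop k\right]$. The genuinely hard part, which your proposal never touches, is the ``moreover'' clause: equality in the ratio bound forces $\vec{h}\in U_{0}\oplus U_{1}$, and one must then show that for $n\geq 2k+1$ the only $0/1$ vectors in $U_{0}\oplus U_{1}$ are indicators of point-pencils $\left\{H:E\leq H\right\}$ --- this equality analysis is carried out in \cite{ekr vector} and \cite{kneser}, while Hsieh's original proof is purely combinatorial. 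As it stands, your proposal establishes neither the bound nor the uniqueness for the theorem actually posed.
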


It is routine to enumerate the subspaces that intersect a given subspace trivially.
\begin{lem}\label{l21}{\rm(\cite[Propositions 2.2]{chen})} Let $Z$ be an $m$-dimensional subspace of the $n$-dimensional vector space $V$ over~$\mathbb{F}_{q}$. For a positive integer $l$ with $m+l\leq n$, the number of~$l$-dimensional subspaces~~$W$~of~~$V$~such that ${\rm dim}(Z\cap W)=0$ is $q^{lm}\left[n-m\atop l\right]$.
\end{lem}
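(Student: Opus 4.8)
The plan is to prove this by the standard ``count ordered spanning tuples, then divide by the number of ordered bases'' technique, which is the cleanest route to identities of this Gaussian-coefficient type.

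First I would reformulate the object being counted. An $l$-dimensional subspace $W$ with $\dim(Z\cap W)=0$ is exactly the span of an ordered tuple $(w_1,\dots,w_l)$ that is linearly independent \emph{together with} $Z$; equivalently $w_i\notin Z+\langle w_1,\dots,w_{i-1}\rangle$ for every $i$. So I first count such ordered tuples. Having fixed admissible $w_1,\dots,w_{i-1}$, the subspace $Z+\langle w_1,\dots,w_{i-1}\rangle$ has dimension $m+(i-1)$, hence exactly $q^{n}-q^{m+i-1}$ vectors $w_i$ are admissible. Multiplying over $i$ gives $\prod_{i=0}^{l-1}(q^{n}-q^{m+i})$ ordered tuples.

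Next I would pass from tuples to subspaces. Every fixed $l$-dimensional $W$ is the span of exactly $\prod_{i=0}^{l-1}(q^{l}-q^{i})$ ordered bases (the number of ordered bases of an $l$-dimensional space over $\mathbb{F}_q$), and each such basis is precisely one of the admissible tuples above. Therefore the number sought is the ratio $\dfrac{\prod_{i=0}^{l-1}(q^{n}-q^{m+i})}{\prod_{i=0}^{l-1}(q^{l}-q^{i})}$. The only remaining work is algebraic simplification: writing $q^{n}-q^{m+i}=q^{m+i}(q^{n-m-i}-1)$ and $q^{l}-q^{i}=q^{i}(q^{l-i}-1)$, the powers of $q$ pulled out are $q^{lm+\binom{l}{2}}$ from the numerator and $q^{\binom{l}{2}}$ from the denominator, leaving the prefactor $q^{lm}$; the surviving quotient $\prod_{i=0}^{l-1}(q^{n-m-i}-1)\big/\prod_{i=0}^{l-1}(q^{l-i}-1)$ is exactly $\left[n-m\atop l\right]$ by definition. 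This yields $q^{lm}\left[n-m\atop l\right]$.

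As an independent sanity check and to explain the factor $q^{lm}$ conceptually, I would note the quotient viewpoint: projecting onto $V/Z\cong\mathbb{F}_q^{\,n-m}$ sends each $W$ with $W\cap Z=0$ isomorphically onto an $l$-dimensional subspace $\overline{W}\le V/Z$, and the fibre of lifts of a fixed $\overline{W}$ is a torsor under $\mathrm{Hom}(\overline{W},Z)$, of size $q^{lm}$. Since there are $\left[n-m\atop l\right]$ choices of $\overline{W}$, this reproduces the count. There is no serious obstacle here; the only thing to be careful about is the bookkeeping in the exponent of $q$ (equivalently, the torsor identification), and the hypothesis $m+l\le n$ is exactly what guarantees $\left[n-m\atop l\right]\neq 0$ so that admissible tuples exist.
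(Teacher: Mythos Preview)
Your argument is correct: the count of ordered independent tuples over $Z$ followed by division by the number of ordered bases of an $l$-space is the standard route, and your exponent bookkeeping and the final identification with $q^{lm}\left[{n-m\atop l}\right]$ are accurate. The quotient-space remark is also valid and gives a clean conceptual explanation of the factor $q^{lm}$.

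There is nothing to compare against: the paper does not supply its own proof of this lemma but simply quotes it from Chen--Rota \cite{chen}, prefacing it with the remark that the enumeration is routine. Your write-up is exactly such a routine verification, so it fills in what the paper leaves to the literature.
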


Many scholars have studied $q$-identities obtained from classical binomial identities. A well-known relation involving binomial coefficients is binomial theorem. In parallel, we note two similar relations involving Gaussian binomial coefficients.

\begin{lem}\label{l10}{\rm(\cite[Theorem 3.3]{huang})} Let $z$ be an indeterminate and $m$ be a positive integer. Then
\begin{equation*}
\prod\limits_{i=0}^{m-1}(1-q^{i}z)=\sum\limits_{j=0}^{m}(-1)^{j}q^{\binom{j}{2}}\left[m\atop j\right]z^{j},
\end{equation*}

\begin{equation*}
\frac{1}{\prod\limits_{i=0}^{m-1}(1-q^{i}z)}=\sum\limits_{j=0}^{\infty}\left[m+j-1\atop j\right]z^{j}.
\end{equation*}
\end{lem}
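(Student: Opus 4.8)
The plan is to prove both identities by induction on $m$, exploiting the functional factorization $P_m(z) := \prod_{i=0}^{m-1}(1-q^{i}z) = (1-z)\,P_{m-1}(qz)$ together with the $q$-Pascal recurrence $\left[a\atop b\right] = q^{b}\left[a-1\atop b\right] + \left[a-1\atop b-1\right]$. The factorization is immediate, since $P_{m-1}(qz) = \prod_{i=1}^{m-1}(1-q^{i}z)$, so multiplying by $(1-z)$ restores the missing $i=0$ factor. I will carry out all power-series manipulations formally in $\mathbb{Q}(q)[[z]]$; because $P_m$ has constant term $1$ it is invertible there, so the left-hand side of the second identity is a well-defined formal power series.

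For the first identity, write the right-hand side as $S_m(z) := \sum_{j=0}^{m}(-1)^{j}q^{\binom{j}{2}}\left[m\atop j\right]z^{j}$, with base case $S_1(z) = 1-z = P_1(z)$. Assuming $P_{m-1}=S_{m-1}$, I apply the substitution $z\mapsto qz$: this multiplies the $z^{j}$-term by $q^{j}$, and since $\binom{j}{2}+j = \binom{j+1}{2}$ the coefficient of $z^{j}$ in $S_{m-1}(qz)$ becomes $(-1)^{j}q^{\binom{j+1}{2}}\left[m-1\atop j\right]$. Multiplying by $(1-z)$ and collecting the $z^{j}$-coefficient yields $(-1)^{j}q^{\binom{j}{2}}\bigl(q^{j}\left[m-1\atop j\right]+\left[m-1\atop j-1\right]\bigr)$, where I used $\binom{j+1}{2}-\binom{j}{2}=j$; the $q$-Pascal recurrence collapses the bracket to $\left[m\atop j\right]$, completing the step (and the truncation at $j=m$ is automatic since $\left[m-1\atop m\right]=0$).

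For the second identity, let $R_m(z):=\sum_{j=0}^{\infty}\left[m+j-1\atop j\right]z^{j}$; I will show $R_m=P_m^{-1}$ by verifying that $R_m$ obeys the reciprocal recurrence $R_m(z)=(1-z)^{-1}R_{m-1}(qz)$ with base case $R_1(z)=\sum_{j\geq 0}z^{j}=(1-z)^{-1}$. Extracting the $z^{n}$-coefficient on the right reduces this to the finite summation identity $\sum_{j=0}^{n}q^{j}\left[m-2+j\atop j\right]=\left[m-1+n\atop n\right]$, which I prove by a nested induction on $n$: the inductive step is exactly one more application of the same $q$-Pascal recurrence, now with $a=m-1+n$ and $b=n$. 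Taking reciprocals of $P_m(z)=(1-z)P_{m-1}(qz)$ then shows that $P_m^{-1}$ satisfies the same recurrence as $R_m$, so the two agree for all $m$.

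The routine parts are the two base cases and the bookkeeping of $q$-exponents; the single point requiring care is the substitution $z\mapsto qz$ in the first identity, where the interaction of the factor $q^{j}$ with $q^{\binom{j}{2}}$ is precisely what turns $\binom{j}{2}$ into $\binom{j+1}{2}$ and thereby supplies the extra factor $q^{j}$ demanded by the $q$-Pascal rule. Choosing the other form of the recurrence (the one with $q^{a-b}$ in place of $q^{b}$) would break this matching, so I expect no genuine obstacle beyond selecting the correct recurrence, the functional equation making each induction self-correcting.
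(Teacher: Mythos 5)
Your proof is correct: both inductions are sound, the functional equation $P_m(z)=(1-z)P_{m-1}(qz)$ is exact, you use the right form of the $q$-Pascal recurrence $\left[m\atop j\right]=q^{j}\left[m-1\atop j\right]+\left[m-1\atop j-1\right]$ in both the coefficient extraction for the first identity and the nested induction $\sum_{j=0}^{n}q^{j}\left[m-2+j\atop j\right]=\left[m-1+n\atop n\right]$ for the second, and the formal-power-series uniqueness argument closes the loop. The paper gives no proof of this lemma (it cites it verbatim from Andrews, Theorem 3.3), and your functional-equation-plus-recurrence induction is essentially the classical argument given in that source, so nothing further is needed.
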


Let $G$ be the $q$-Kneser graph whose vertex set is $\left[V\atop k\right]$ and two vertices $F, F'$ are adjacent if ${\rm dim}(F\cap F')=0$. Let $A$ be $q^{-k^{2}}$ times the adjacency matrix of $G$. Then it is well-known (e.g. \cite{ekr vector} or \cite{kneser}) that $A$ has eigenvalues $\lambda_{i}$ with multiplicity $m_{i}:=\left[n\atop i\right]-\left[n\atop i-1\right]$, where
\begin{equation}\label{e2}
\lambda_{i}=(-1)^{i}q^{\binom{i}{2}-ki}\left[n-k-i\atop k-i\right],
\end{equation}
$i=0, 1, 2, \ldots, k$. Let $U$ be the vector space of dimension $\left[n\atop k\right]$ over $\mathbb{R}$ with coordinates indexed by $k$-dimensional subspaces of $V$. Then $U$ has
an orthogonal decomposition
\begin{equation*}
U=U_{0}\oplus U_{1}\oplus \cdots \oplus U_{k},
\end{equation*}
where $U_{i}$ is the eigenspace corresponding to $\lambda_{i}$. Moreover, the eigenspace $U_{0}$ of the eigenvalue $\lambda_{0}=\left[n-k\atop k\right]$ is spanned by the unit vector $\frac{\vec{1}}{\sqrt{\left[n\atop k\right]}}$, where $\vec{1} \in\mathbb{R}^{\left[n\atop k\right]}$ denotes the all ones vector.

Let $\vec{h}\in \mathbb{R}^{\left[n\atop k\right]}$ be the indicating vector of the intersecting family $\mathcal{F}$. For $i=0, 1, 2, \ldots, k$, let $\vec{h}_{i}$ be the projection of $\vec{h}$ onto the subspace $U_{i}$. We have
\begin{equation}\label{e1}
\vec{h}=\vec{h}_{0}+ \vec{h}_{1}+ \cdots +\vec{h}_{k},
\end{equation}
\begin{equation*}
A\vec{h}_{i}=(-1)^{i}q^{\binom{i}{2}-ki}\left[n-k-i\atop k-i\right]\vec{h}_{i}.
\end{equation*}
Endow $V$ with the Euclidean inner product. It is obvious that $\langle \vec{h}_{i},\vec{h}_{j}\rangle=\vec{h}_{i}^{T}\vec{h}_{j}=0$ if $i\neq j$.

For $0\leq i\leq j\leq k$, let us define the matrices $W_{i,j}$ of size $\left[n\atop i\right]\times \left[n\atop j\right]$ with rows indexed by the $i$-dimensional subspaces of $V$, columns indexed by the $j$-dimensional subspaces of $V$, and
\begin{equation*}
(W_{i,j})_{S,T}=
\begin{cases}
1, &\text{if~$S\leq T$},\\[.3cm]
0, &\text{otherwise}.
\end{cases}\
\end{equation*}
Similarly, we also define the matrices $\overline{W}_{i,j}$ of size $\left[n\atop i\right]\times \left[n\atop j\right]$ by
\begin{equation*}
(\overline{W}_{i,j})_{S,T}=
\begin{cases}
1, &\text{if~$S\cap T=\{\textbf{0}\}$},\\[.3cm]
0, &\text{otherwise}.
\end{cases}\
\end{equation*}
Let us recall some identities involving the two matrices defined above.
\begin{lem}\label{l3}{\rm(\cite{ekr vector})}
For $0\leq i\leq j\leq r\leq k$, the following hold.
\begin{itemize}
\item[\rm(i)] $\overline{W}_{i,j}=\sum\limits_{m=0}^{i}(-1)^{m}q^{\binom{m}{2}}W_{m,i}^{T}W_{m,j}$.
\item[\rm(ii)] $W_{i,j}=\sum\limits_{m=0}^{i}(-1)^{m}q^{\binom{m+1}{2}-mi}W_{m,i}^{T}\overline{W}_{m,j}$.
\item[\rm(iii)]
$W_{i,j}W_{j,r}=\left[r-i\atop j-i\right]W_{i,r}$.
\item[\rm(iv)]
${\rm rowsp}(W_{i,k})=U_{0}\oplus U_{1}\oplus \cdots \oplus U_{i}$, where ${\rm rowsp}(W_{i,k})$ denotes the row space of $W_{i,k}$ over $\mathbb{R}$.
\end{itemize}
\end{lem}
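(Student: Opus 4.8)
The plan is to prove Theorem \ref{4} by a spectral argument that mimics the Hilton--Milner style weighting used by Huang and Zhang, transported to the $q$-analog via the eigenvalue data in \eqref{e2} and the inclusion matrices of Lemma \ref{l3}. Suppose for contradiction that $\mathcal{F}$ is intersecting but every $d$-dimensional subspace $S$ satisfies $d_S(\mathcal{F})>\left[n-d-1\atop k-d-1\right]$. Using the indicator vector $\vec h$ of $\mathcal{F}$ and its spectral decomposition \eqref{e1}, I would first translate the degree hypothesis into a statement about $W_{d,k}\vec h$: the entry of $W_{d,k}\vec h$ indexed by $S$ is exactly $d_S(\mathcal{F})$, so the assumption says every coordinate of $W_{d,k}\vec h$ strictly exceeds $\left[n-d-1\atop k-d-1\right]$. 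The goal is to derive a contradiction by combining this lower bound with the upper bound on $\langle\vec h,A\vec h\rangle$ forced by the intersecting property (namely $A\vec h=\vec 0$, since no two members of $\mathcal{F}$ are disjoint).

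The key steps, in order, are as follows. First I would record $|\mathcal{F}|=\langle\vec h,\vec h\rangle=\sum_{i=0}^k\|\vec h_i\|^2$ and $\langle\vec h,\vec 1\rangle=\|\vec h_0\|^2\cdot\left[n\atop k\right]$ (after writing $\vec h_0=\frac{|\mathcal{F}|}{\left[n\atop k\right]}\vec 1$). Second, from $A\vec h=\sum_i\lambda_i\vec h_i=\vec 0$ and the fact that $\lambda_0>0$ while the intersecting condition is encoded by this vanishing, I would extract an identity relating $\|\vec h_0\|^2$ to the higher components weighted by the $\lambda_i$. Third, and this is the crux, I would sum the coordinate-wise lower bounds on $W_{d,k}\vec h$ against a carefully chosen nonnegative test vector. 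Concretely, $\vec 1^T W_{d,k}\vec h=\sum_{S}d_S(\mathcal{F})>\left[n\atop d\right]\left[n-d-1\atop k-d-1\right]$, and independently $\vec 1^T W_{d,k}\vec h=\left[k\atop d\right]|\mathcal{F}|$ by double counting incidences (each $F\in\mathcal{F}$ contains $\left[k\atop d\right]$ many $d$-subspaces). This yields a clean lower bound $|\mathcal{F}|>\frac{\left[n\atop d\right]}{\left[k\atop d\right]}\left[n-d-1\atop k-d-1\right]$ that I would then contradict using Theorem \ref{BBC}'s bound $|\mathcal{F}|\le\left[n-1\atop k-1\right]$, after verifying the Gaussian-binomial inequality $\frac{\left[n\atop d\right]}{\left[k\atop d\right]}\left[n-d-1\atop k-d-1\right]\ge\left[n-1\atop k-1\right]$ for $n\ge 2k+1$.

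I expect the naive incidence count above to be too weak on its own, so the genuine argument must use the spectral structure rather than just the size bound. The refined plan is to apply Lemma \ref{l3}(iv): since $\mathrm{rowsp}(W_{d,k})=U_0\oplus\cdots\oplus U_d$, the vector $W_{d,k}\vec h$ sees only the low-order components $\vec h_0,\ldots,\vec h_d$ of $\vec h$, and its squared norm $\|W_{d,k}\vec h\|^2$ can be expanded via the Gram matrix $W_{d,k}W_{d,k}^T$ (computable from Lemma \ref{l3}(iii) as a polynomial in the $W_{m,d}W_{m,d}^T$). The strategy is then to bound $\|W_{d,k}\vec h\|^2$ from below by the coordinatewise hypothesis (each coordinate exceeds $\left[n-d-1\atop k-d-1\right]$, giving $\|W_{d,k}\vec h\|^2>\left[n\atop d\right]\left[n-d-1\atop k-d-1\right]^2$) and from above by $\sum_{i=0}^d\gamma_i\|\vec h_i\|^2$, where the coefficients $\gamma_i$ are the eigenvalues of the Gram operator restricted to $U_i$. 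Feeding in the constraint from $A\vec h=\vec 0$ — which forces $\|\vec h_0\|^2$ to dominate and controls $\|\vec h_i\|^2$ for $i\ge 1$ through the ratios $\lambda_i/\lambda_0$ — should collapse the two bounds into a contradiction precisely when $n\ge 2k+1$.

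The main obstacle, and the part requiring the most care, will be the third step: computing the Gram eigenvalues $\gamma_i$ of $W_{d,k}W_{d,k}^T$ on each $U_i$ and showing the resulting weighted inequality is \emph{tight enough} to force a contradiction down to $n\ge 2k+1$ rather than only for large $n$. This is where the $q$-binomial identities of Lemma \ref{l10} and the explicit eigenvalue formula \eqref{e2} must be combined delicately, since the sign-alternating behavior of the $\lambda_i$ makes the estimate on $\sum_{i\ge 1}(\lambda_i/\lambda_0)$-type quantities subtle. I anticipate that verifying the crucial Gaussian-binomial inequality that closes the gap — an identity of the same flavor as those controlling $\frac{\left[n\atop d\right]}{\left[k\atop d\right]}$ versus the eigenvalue ratios — will be the most delicate computation, and that monotonicity of $\left[m\atop k\right]_q$ in $m$ together with the hypothesis $d\ge 2$ (which guarantees enough "slack" in the higher eigenspaces) will be exactly what makes the range $n\ge 2k+1$ attainable.
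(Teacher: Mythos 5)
Your proposal does not address the statement at hand. The statement to be proved is Lemma~\ref{l3}: four algebraic identities about the inclusion matrices $W_{i,j}$ and disjointness matrices $\overline{W}_{i,j}$ over $\left[V\atop i\right]\times\left[V\atop j\right]$. What you have written is instead a strategy sketch for the main result, Theorem~\ref{4}, which \emph{uses} Lemma~\ref{l3}(iii) and (iv) as black boxes. Nothing in your text establishes (i)--(iv): a genuine proof would require, for instance, a $q$-analog inclusion--exclusion (M\"obius inversion over the subspace lattice, where the M\"obius function contributes the factors $(-1)^{m}q^{\binom{m}{2}}$) for (i) and (ii); a double count of intermediate subspaces $S\leq T\leq U$ for (iii), where $\left[r-i\atop j-i\right]$ counts the $j$-spaces between a fixed $i$-space and a fixed $r$-space; and for (iv) an argument that ${\rm rowsp}(W_{i,k})$ is an $A$-invariant subspace of dimension $\left[n\atop i\right]=m_0+m_1+\cdots+m_i$ containing ${\rm rowsp}(W_{i-1,k})$, hence equals $U_0\oplus\cdots\oplus U_i$. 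None of these ideas appear in your proposal, so as a proof of Lemma~\ref{l3} it is vacuous. (The paper itself does not reprove the lemma either; it cites Frankl--Wilson \cite{ekr vector}, which is where these computations are carried out.)

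Even judged as a sketch of Theorem~\ref{4}, there is a concrete error worth flagging: you assert $A\vec h=\vec 0$ from the intersecting property. This is false in general; the intersecting condition only forces the coordinates of $A\vec h$ indexed by members of $\mathcal{F}$ to vanish, equivalently $\vec h^{T}A\vec h=0$, which is exactly what Lemma~\ref{l1} uses (together with a Hoffman-type truncation of the tail eigenvalues). Your subsequent step of ``extracting an identity relating $\|\vec h_0\|^2$ to the higher components'' from $\sum_i\lambda_i\vec h_i=\vec 0$ therefore fails; the paper instead derives the \emph{inequality} of Lemma~\ref{l1}. Note also that the paper's crucial quadratic form is $\left(W_{d,k}\vec h\right)^{T}\overline{W}_{d,d}\left(W_{d,k}\vec h\right)$ --- counting pairs $(S,T)$ of $d$-spaces with $S\cap T=\{\textbf{0}\}$ --- rather than the Gram form $\|W_{d,k}\vec h\|^{2}$ you propose; the $\overline{W}_{d,d}$ weighting is what makes the two bounds of Lemmas~\ref{l1} and~\ref{l2} combine, after eliminating $\|\vec h_1\|^{2}$ via the ratio $b_1/a_1$, into the factored inequality $0<\bigl(|\mathcal{F}|-\left[n-1\atop k-1\right]\bigr)\bigl(|\mathcal{F}|-\frac{\left[n-d-1\atop k-d-1\right]\left[n\atop d\right]}{\left[k\atop d\right]}\bigr)$ that contradicts Theorem~\ref{BBC} for $n\geq 2k+1$.
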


\begin{lem}\label{l6}{\rm(\cite{lym})} Let $0\leq j\leq i\leq k<n$. Then
\begin{equation}\label{e7}
W_{i,k}W_{j,k}^{T}=\sum\limits_{m=0}^{j}q^{m(k+m-i-j)}\left[n-i-j\atop n-k-m\right]W_{m,i}^{T}W_{m,j}.
\end{equation}
\end{lem}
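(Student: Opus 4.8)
The plan is to prove the stated matrix equality entrywise. Fix an $i$-dimensional subspace $S$ and a $j$-dimensional subspace $T$ of $V$, and set $t:={\rm dim}(S\cap T)$; note $\max\{0,i+j-n\}\le t\le j$ since ${\rm dim}(S\cap T)\le {\rm dim}(T)=j$. For the left-hand side,
\[
(W_{i,k}W_{j,k}^{T})_{S,T}=\sum_{K\in\left[V\atop k\right]}(W_{i,k})_{S,K}(W_{j,k})_{T,K}
\]
counts the $k$-dimensional subspaces $K$ with $S\le K$ and $T\le K$, i.e.\ those containing $S+T$; since ${\rm dim}(S+T)=i+j-t$, passing to the quotient $V/(S+T)$ of dimension $n-i-j+t$ gives $(W_{i,k}W_{j,k}^{T})_{S,T}=\left[n-i-j+t\atop k-i-j+t\right]$. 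On the right-hand side, $(W_{m,i}^{T}W_{m,j})_{S,T}$ counts the $m$-dimensional subspaces $R$ with $R\le S$ and $R\le T$, i.e.\ $R\le S\cap T$, which is $\left[t\atop m\right]$. Hence both $(S,T)$-entries depend only on $t$, and it remains to verify the scalar identity
\[
\left[n-i-j+t\atop k-i-j+t\right]=\sum_{m=0}^{t}q^{m(k+m-i-j)}\left[n-i-j\atop n-k-m\right]\left[t\atop m\right]
\]
for each $0\le t\le j$, the upper limit being reducible to $t$ because $\left[t\atop m\right]=0$ for $m>t$.

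Next I would normalise the parameters. Writing $a:=n-i-j$ and $b:=n-k$, the symmetry $\left[N\atop K\right]=\left[N\atop N-K\right]$ turns the left-hand side into $\left[a+t\atop b\right]$ (the complementary lower index being $(n-i-j+t)-(k-i-j+t)=n-k$), while $k+m-i-j=m+(k-i-j)=m+a-b$. The target thus becomes the $q$-Vandermonde (Chu--Vandermonde) convolution
\[
\left[a+t\atop b\right]=\sum_{m\ge 0}q^{m(m+a-b)}\left[a\atop b-m\right]\left[t\atop m\right].
\]
To prove this I would compare coefficients of $z^{b}$ in the factorisation $\prod_{i=0}^{a+t-1}(1-q^{i}z)=\prod_{i=0}^{a-1}(1-q^{i}z)\cdot\prod_{i=0}^{t-1}(1-q^{a+i}z)$. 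Expanding the left factor, and the right factor after the substitution $z\mapsto q^{a}z$, by the first identity of Lemma~\ref{l10}, the coefficient of $z^{b}$ yields
\[
(-1)^{b}q^{\binom{b}{2}}\left[a+t\atop b\right]=\sum_{m}(-1)^{b-m}q^{\binom{b-m}{2}}\left[a\atop b-m\right]\,(-1)^{m}q^{\binom{m}{2}+am}\left[t\atop m\right].
\]
Cancelling the common sign $(-1)^{b}$ and simplifying $\binom{b-m}{2}+\binom{m}{2}+am-\binom{b}{2}=m(m+a-b)$ gives exactly the convolution above, establishing it for all non-negative integers $a,t$.

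The computation is essentially routine; the two places demanding care are the exponent bookkeeping in the last step (the interplay of the three $\binom{\cdot}{2}$ terms produced by Lemma~\ref{l10} with the shift $am$ coming from $z\mapsto q^{a}z$) and the parameter ranges. The factorisation presupposes $a=n-i-j\ge 0$, which can fail when $i+j>n$; but only $t\ge i+j-n$ occurs as a genuine intersection dimension, so $a+t\ge 0$ throughout, and to cover every case uniformly I would invoke the polynomial principle. For fixed non-negative integers $b,t$ both sides of the convolution are polynomials in $X:=q^{a}$ of degree at most $b$ (the factor $q^{m(m+a-b)}$ contributes $q^{m(m-b)}X^{m}$, and $\left[a\atop b-m\right]$ is a polynomial in $X$ of degree $b-m$ by the product formula), and they agree at the infinitely many points $X=q^{a}$, $a=b,b+1,\dots$, where the generating-function argument applies; hence they agree identically, in particular at $a=n-i-j$ regardless of its sign. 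The identity \eqref{e7} then follows entrywise. The main obstacle is therefore not a single hard step but keeping the $q$-powers and the out-of-range Gaussian coefficients consistent with the conventions $\left[m\atop k\right]=0$ for $k\in\mathbb{Z}^{-}$ and the real-top product-formula definition.
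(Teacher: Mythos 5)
Your proof is correct, and there is an important point of comparison to make: the paper does not prove Lemma~\ref{l6} at all --- it is quoted from Bey \cite{lym} without proof --- so you have supplied a self-contained argument where the authors rely on a citation. Your reduction is sound: both sides of \eqref{e7} are constant on pairs $(S,T)$ with fixed $t={\rm dim}(S\cap T)$, the left entry being $\left[n-i-j+t\atop k-i-j+t\right]$ (count $k$-spaces over $S+T$) and the right entries $\left[t\atop m\right]$ (count $m$-spaces inside $S\cap T$), so the matrix identity reduces to the $q$-Vandermonde convolution $\left[a+t\atop b\right]=\sum_{m\geq 0}q^{m(m+a-b)}\left[a\atop b-m\right]\left[t\atop m\right]$ with $a=n-i-j$, $b=n-k$. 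Your coefficient extraction from the factorisation $\prod_{l=0}^{a+t-1}(1-q^{l}z)=\prod_{l=0}^{a-1}(1-q^{l}z)\cdot\prod_{l=0}^{t-1}\bigl(1-q^{l}(q^{a}z)\bigr)$ via the first identity of Lemma~\ref{l10} checks out, including the exponent bookkeeping $\binom{b-m}{2}+\binom{m}{2}+am-\binom{b}{2}=m(m+a-b)$; notably this is the same device the authors themselves use to verify \eqref{e11} inside the proof of Lemma~\ref{l8}, so your argument fits the paper's toolkit exactly. The one genuinely delicate point --- that $a=n-i-j$ can be negative since the lemma assumes only $k<n$, while Lemma~\ref{l10} needs a non-negative product length --- you identified and resolved correctly: both sides of the convolution are polynomials in $X=q^{a}$ of degree at most $b$ (the term $q^{m(m+a-b)}$ contributing $q^{m(m-b)}X^{m}$ and $\left[a\atop b-m\right]$ being a degree-$(b-m)$ polynomial in $X$ under the paper's real-argument product convention), and agreement at the infinitely many distinct points $X=q^{a}$, $a\in\mathbb{Z}_{\geq 0}$, forces identical polynomials; one can confirm the negative-$a$ regime on a small example such as $n=3$, $k=i=j=2$, $t=2$, where the right side gives $-q^{-1}+q^{-1}(q+1)+0=1$, matching the count. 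The only micro-gap is that your symmetry step $\left[N\atop K\right]=\left[N\atop N-K\right]$ with $N=a+t$ silently includes the degenerate subcase $K=k-i-j+t<0$, where the left side is $0$ by the negative-index convention while $\left[a+t\atop b\right]$ vanishes because $b>a+t\geq 0$ puts a zero factor in the numerator of the product formula --- consistent, and covered by the conventions you flag at the end, but worth stating explicitly in a final write-up.
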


In the following Lemma, we prove that $U_{j}$ is an eigenspace for the matrix $W_{i,k}^{T}W_{i,k}$ for its eigenvalue $q^{j(k-i)}\left[k-j\atop k-i\right]\left[n-i-j\atop k-i\right]$ if $j\leq i$. This
eigenvalue vanishes if $i<j$.
\begin{lem}\label{l4}
For $0\leq i, j\leq k$, $\vec{v}\in U_{j}$, we have
\begin{equation}\label{e9}
W_{i,k}^{T}W_{i,k}\vec{v}=
\begin{cases}
q^{j(k-i)}\left[k-j\atop k-i\right]\left[n-i-j\atop k-i\right]\vec{v}, &\text{if~~$0\leq j\leq i$},\\[.3cm]
\vec{0}, &\text{if~~$i<j$}.
\end{cases}\
\end{equation}
\end{lem}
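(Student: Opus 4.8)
The plan is to treat $W_{i,k}^{T}W_{i,k}$ as a symmetric operator on $U=\mathbb{R}^{\left[n\atop k\right]}$ and to prove the two cases by different mechanisms, using that its kernel and row space are pinned down by Lemma~\ref{l3}(iv). For the case $i<j$, recall that for any real matrix $\ker(W_{i,k}^{T}W_{i,k})=\ker(W_{i,k})=(\mathrm{rowsp}\,W_{i,k})^{\perp}$, and by Lemma~\ref{l3}(iv) the row space of $W_{i,k}$ is exactly $U_{0}\oplus\cdots\oplus U_{i}$. Hence for $j>i$ the eigenspace $U_{j}$ lies in $(\mathrm{rowsp}\,W_{i,k})^{\perp}=\ker(W_{i,k})$, so $W_{i,k}^{T}W_{i,k}\vec{v}=\vec{0}$ for every $\vec{v}\in U_{j}$, which is the second branch of \eqref{e9}.

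For the case $0\le j\le i$ I would argue by induction on the top dimension. For each $K$ with $1\le K\le k$ let $\mathbb{R}^{\left[n\atop K\right]}=U_{0}^{(K)}\oplus\cdots\oplus U_{K}^{(K)}$ be the eigenspace decomposition attached to the $q$-Kneser graph on $\left[V\atop K\right]$, so that $U_{j}=U_{j}^{(k)}$ and, by Lemma~\ref{l3}(iv) applied at level $K$, $\mathrm{rowsp}\,W_{m,K}=U_{0}^{(K)}\oplus\cdots\oplus U_{m}^{(K)}$. The induction hypothesis is that the stated formula holds with the top level $k$ replaced by any smaller one; it terminates because each reduction lowers this level, and the diagonal case $i=K$ is immediate since $W_{K,K}^{T}W_{K,K}=I$ and the formula collapses to $1$.

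Now fix $\vec{v}\in U_{j}^{(k)}$ with $j\le i<k$ and set $\vec{u}:=W_{i,k}\vec{v}$. The first key step is to show $\vec{u}\in U_{j}^{(i)}$. Writing $\vec{v}=W_{j,k}^{T}\vec{x}$, which is possible since $U_{j}^{(k)}\subseteq\mathrm{rowsp}\,W_{j,k}$, and applying Lemma~\ref{l6} to $W_{i,k}W_{j,k}^{T}$ shows that $\vec{u}$ lies in $\mathrm{rowsp}\,W_{j,i}=U_{0}^{(i)}\oplus\cdots\oplus U_{j}^{(i)}$; on the other hand Lemma~\ref{l3}(iii) gives $W_{j-1,i}\vec{u}=\left[k-j+1\atop i-j+1\right]W_{j-1,k}\vec{v}=\vec{0}$, because $\vec{v}\perp\mathrm{rowsp}\,W_{j-1,k}$ forces $W_{j-1,k}\vec{v}=\vec{0}$, so $\vec{u}\perp\mathrm{rowsp}\,W_{j-1,i}$. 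Combining the two containments yields $\vec{u}\in U_{j}^{(i)}$. Next I would invoke Lemma~\ref{l6} in its diagonal form $W_{i,k}W_{i,k}^{T}=\sum_{m=0}^{i}q^{m(k+m-2i)}\left[n-2i\atop n-k-m\right]W_{m,i}^{T}W_{m,i}$: since $\vec{u}\in U_{j}^{(i)}$, the induction hypothesis makes each $W_{m,i}^{T}W_{m,i}$ act on $\vec{u}$ as the scalar $\mu^{(i)}_{m,j}=q^{j(i-m)}\left[i-j\atop i-m\right]\left[n-m-j\atop i-m\right]$, whence $W_{i,k}W_{i,k}^{T}\vec{u}=S\,\vec{u}$ with $S=\sum_{m=0}^{i}q^{m(k+m-2i)}\left[n-2i\atop n-k-m\right]\mu^{(i)}_{m,j}$. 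Finally, from $W_{i,k}(W_{i,k}^{T}W_{i,k}\vec{v})=W_{i,k}W_{i,k}^{T}\vec{u}=S\vec{u}=W_{i,k}(S\vec{v})$ I conclude $W_{i,k}^{T}W_{i,k}\vec{v}-S\vec{v}\in\ker W_{i,k}$; but this vector also lies in $\mathrm{rowsp}\,W_{i,k}$, since $W_{i,k}^{T}W_{i,k}\vec{v}\in\mathrm{colsp}\,W_{i,k}^{T}$ and $S\vec{v}\in U_{j}\subseteq\mathrm{rowsp}\,W_{i,k}$ as $j\le i$, and $\mathrm{rowsp}\,W_{i,k}\cap\ker W_{i,k}=\{\vec{0}\}$. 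Therefore $W_{i,k}^{T}W_{i,k}\vec{v}=S\vec{v}$, which establishes both the scalar action on $U_{j}$ and that the eigenvalue equals $S$.

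The remaining and main obstacle is to evaluate $S$ in closed form and to verify that $S=q^{j(k-i)}\left[k-j\atop k-i\right]\left[n-i-j\atop k-i\right]$. This is a $q$-binomial summation, and I expect to carry it out by converting the Gaussian coefficients into the generating-function identities of Lemma~\ref{l10} and extracting the appropriate coefficient. The delicate bookkeeping is matching the exponents $q^{m(k+m-2i)}$ and $q^{j(i-m)}$ against the $\binom{m}{2}$-type factors produced by Lemma~\ref{l10}, and this is where I anticipate the computation to be most technical.
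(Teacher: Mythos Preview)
Your treatment of the case $i<j$ is correct and essentially the paper's argument phrased differently (the paper factors $W_{i,k}$ through $W_{j-1,k}$ via Lemma~\ref{l3}(iii) and uses $W_{j-1,k}\vec v=\vec 0$, which is just another way of saying $U_j\subseteq\ker W_{i,k}$).

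For $j\le i$ your inductive scheme is logically sound: the reduction to level $i$, the verification that $\vec u=W_{i,k}\vec v\in U_j^{(i)}$, and the passage from $W_{i,k}W_{i,k}^{T}\vec u=S\vec u$ to $W_{i,k}^{T}W_{i,k}\vec v=S\vec v$ via $\mathrm{rowsp}\,W_{i,k}\cap\ker W_{i,k}=\{\vec 0\}$ are all valid. The gap is exactly where you place it: you have not established
\[
\sum_{m=j}^{i}q^{\,m(k+m-2i)+j(i-m)}\left[{n-2i\atop n-k-m}\right]\left[{i-j\atop i-m}\right]\left[{n-m-j\atop i-m}\right]
=q^{j(k-i)}\left[{k-j\atop k-i}\right]\left[{n-i-j\atop k-i}\right],
\]
and this is a genuine $q$-Vandermonde-type identity, not a formality. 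Until it is checked, the proof is incomplete.

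The paper sidesteps this summation entirely, and the device is worth knowing because it is much shorter than your route. Rather than working with $W_{i,k}W_{i,k}^{T}$ and inducting, the paper writes $\vec v=W_{j,k}^{T}\vec u$ and applies Lemma~\ref{l6} in its \emph{off-diagonal} form to $W_{i,k}W_{j,k}^{T}$. The key extra ingredient is the claim $W_{m,j}\vec u=\vec 0$ for all $m<j$, proved using the $\overline W$-identities of Lemma~\ref{l3}(i),(ii) together with a direct count of $\overline W_{m,k}W_{j,k}^{T}$. That claim annihilates every term of the Lemma~\ref{l6} expansion except $m=j$, after which one application of Lemma~\ref{l3}(iii) delivers the eigenvalue in closed form --- no induction and no residual $q$-identity. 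In effect you have traded the short algebraic fact ``$W_{m,j}\vec u=\vec 0$ for $m<j$'' for a harder combinatorial summation; the paper's trade is the more economical one.
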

\begin{proof}~Suppose that $i<j$. We have
\begin{equation*}
{\rm rowsp}(W_{j-1,k})=U_{0}\oplus U_{1}\oplus \cdots \oplus U_{j-1}
\end{equation*}
by lemma~\ref{l3}(iv). Since $\vec{v}\in U_{j}$,
\begin{equation*}
W_{j-1,k}\vec{v}=\vec{0}.
\end{equation*}
 Hence,
\begin{equation*}
W_{i,k}^{T}W_{i,k}\vec{v}=W_{i,k}^{T}\cdot\frac{W_{i,j-1}W_{j-1,k}}{\left[k-i\atop j-1-i\right]}\cdot\vec{v}=\vec{0}.
\end{equation*}
by lemma~\ref{l3}(iii).

Suppose that $0\leq j\leq i$. By lemma~\ref{l3}(iv) and $\vec{v}\in U_{j}$. We have
\begin{equation}\label{e6}
\vec{v}=W_{j,k}^{T}\vec{u}
\end{equation}
for some $\vec{u} \in \mathbb{R}^{\left[n\atop j\right]}$. We claim that $W_{m,j}\vec{u}=\vec{0}$ for $m<j$. It is easily checked that $W_{m,k}\vec{v}=\vec{0}$ for any $m<j$ by lemma~\ref{l3}(iv). Then
\begin{equation}\label{e5}
\overline{W}_{m,k}\vec{v}=\sum\limits_{r=0}^{m}(-1)^{r}q^{\binom{r}{2}}W_{r,m}^{T}W_{r,k}\vec{v}=\vec{0}
\end{equation}
by lemma~\ref{l3}(i). For any $m<j\leq k$, $S\in\left[V\atop m\right]$ and $T\in\left[V\atop j\right]$,
\begin{equation}\label{e100}
\begin{array}{rl}
\left(\overline{W}_{m,k}W_{j,k}^{T}\right)_{S, T}\!\!\!\!&=\sum\limits_{F\in\left[V\atop k\right]}\left(\overline{W}_{m,k}\right)_{S, F}\left(W_{j,k}^{T}\right)_{F, T}\\[.3cm]
&=\left|\left\{F\in\left[V\atop k\right]:F\cap S=\{\textbf{0}\}, T\leq F\right\}\right|\\[.3cm]
&=\left|\left\{P\in\left[W\atop k-j\right]:P\cap S=\{\textbf{0}\}\right\}\right|\left(\overline{W}_{m,j}\right)_{S, T}\\[.3cm]
&=q^{m(k-j)}\left[n-m-j\atop k-j\right]\left(\overline{W}_{m,j}\right)_{S, T}
\end{array}
\end{equation}
by lemma~\ref{l21}, where $T\oplus W=V$, ${\rm dim}(W)=n-j$ and $S\leq W$ if $\left(\overline{W}_{m,j}\right)_{S, T}=1$. Then,
\begin{equation*}
\vec{0}\overset{\eqref{e5}}{=}\overline{W}_{m,k}\vec{v}\overset{\eqref{e6}}{=}\overline{W}_{m,k}W_{j,k}^{T}\vec{u}\overset{\eqref{e100}}{=}q^{m(k-j)}\left[n-m-j\atop k-j\right]\overline{W}_{m,j}\vec{u},
\end{equation*}
that is $\overline{W}_{m,j}\vec{u}=\vec{0}$ for any $m<j$. Hence,
\begin{equation}\label{e8}
W_{m,j}\vec{u}=\sum\limits_{r=0}^{m}(-1)^{r}q^{\binom{r+1}{2}-rm}W_{r,m}^{T}\overline{W}_{r,j}\vec{u}=\vec{0}
\end{equation}
by lemma~\ref{l3}(ii), that is, the claim holds. Hence, for $0\leq j\leq i\leq k$, applying Lemma~\ref{l3}(iii) yields
\begin{equation*}
\begin{array}{rl}
W_{i,k}^{T}W_{i,k}\vec{v}\!\!\!\!&\overset{\eqref{e6}}{=}W_{i,k}^{T}W_{i,k}W_{j,k}^{T}\vec{u}\\[.3cm]
&\overset{\eqref{e7}}{=}W_{i,k}^{T}\left(\sum\limits_{r=0}^{j}q^{r(k+r-i-j)}\left[n-i-j\atop n-k-r\right]W_{r,i}^{T}W_{r,j}\right)\vec{u}\\[.3cm]
&\overset{\eqref{e8}}{=}q^{j(k-i)}\left[n-i-j\atop n-k-j\right]W_{i,k}^{T}W_{j,i}^{T}W_{j,j}\vec{u}\\[.3cm]
&=q^{j(k-i)}\left[n-i-j\atop k-i\right]\left(W_{j,i}W_{i,k}\right)^{T}\vec{u}\\[.3cm]
&=q^{j(k-i)}\left[k-j\atop i-j\right]\left[n-i-j\atop k-i\right]W_{j,k}^{T}\vec{u}\\[.3cm]
&\overset{\eqref{e6}}{=}q^{j(k-i)}\left[k-j\atop k-i\right]\left[n-i-j\atop k-i\right]\vec{v}.
\end{array}
\end{equation*}
Note that $W_{j,j}$ is the identity matrix.
\end{proof}

\begin{lem}\label{l8}
Let $k>d$ and $n\geq 2k$. We have
\begin{equation}\label{e10}
\left(\!W_{d,k}\vec{h}\right)^{T}\!\overline{W}_{d,d}\!\left(\!W_{d,k}\vec{h}\right)\!=\!\sum\limits_{r=0}^{d}(\!-1)^{r}\!q^{kr+(d-r)^{2}\!-\!\binom{r+1}{2}}\!\left[k\!-\!r\atop d\!-\!r\right]\!\!\!\left[n\!-\!d\!-\!r\atop d\!-\!r\right]\!\!\!\left[n\!-\!d\!-\!r\atop k\!-\!d\right]\!\!\|\vec{h}_{r}\|^{2}.
\end{equation}
\end{lem}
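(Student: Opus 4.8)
The plan is to compute the quadratic form $(W_{d,k}\vec{h})^{T}\overline{W}_{d,d}(W_{d,k}\vec{h})$ by diagonalizing every matrix that appears against the orthogonal eigenspace decomposition $\vec{h}=\vec{h}_{0}+\cdots+\vec{h}_{k}$, and then to identify the resulting coefficient of each $\|\vec{h}_{r}\|^{2}$ with the claimed expression via a single $q$-binomial identity. First I would abbreviate $\vec{g}=W_{d,k}\vec{h}$ and expand $\overline{W}_{d,d}$ through Lemma~\ref{l3}(i) with $i=j=d$, namely $\overline{W}_{d,d}=\sum_{m=0}^{d}(-1)^{m}q^{\binom{m}{2}}W_{m,d}^{T}W_{m,d}$. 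This rewrites the left-hand side as $\sum_{m=0}^{d}(-1)^{m}q^{\binom{m}{2}}\|W_{m,d}\vec{g}\|^{2}$, and Lemma~\ref{l3}(iii) in the form $W_{m,d}W_{d,k}=\left[k-m\atop d-m\right]W_{m,k}$ collapses $W_{m,d}\vec{g}=\left[k-m\atop d-m\right]W_{m,k}\vec{h}$, so each term becomes $\left[k-m\atop d-m\right]^{2}\vec{h}^{T}W_{m,k}^{T}W_{m,k}\vec{h}$.

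Next I would invoke Lemma~\ref{l4}: since $W_{m,k}^{T}W_{m,k}$ multiplies $U_{r}$ by $q^{r(k-m)}\left[k-r\atop k-m\right]\left[n-m-r\atop k-m\right]$ when $r\leq m$ and annihilates $U_{r}$ when $r>m$, the orthogonality $\langle\vec{h}_{r},\vec{h}_{r'}\rangle=0$ for $r\neq r'$ gives $\vec{h}^{T}W_{m,k}^{T}W_{m,k}\vec{h}=\sum_{r=0}^{m}q^{r(k-m)}\left[k-r\atop k-m\right]\left[n-m-r\atop k-m\right]\|\vec{h}_{r}\|^{2}$. Substituting this back and interchanging the two summations (so that $m$ runs from $r$ to $d$ for each fixed $r$) expresses the whole form as $\sum_{r=0}^{d}c_{r}\|\vec{h}_{r}\|^{2}$ with the explicit inner sum $c_{r}=\sum_{m=r}^{d}(-1)^{m}q^{\binom{m}{2}+r(k-m)}\left[k-m\atop d-m\right]^{2}\left[k-r\atop k-m\right]\left[n-m-r\atop k-m\right]$.

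The main obstacle is the final step of matching $c_{r}$ to the target coefficient $(-1)^{r}q^{kr+(d-r)^{2}-\binom{r+1}{2}}\left[k-r\atop d-r\right]\left[n-d-r\atop d-r\right]\left[n-d-r\atop k-d\right]$. I would substitute $m=r+s$ to factor out $(-1)^{r}q^{kr-\binom{r+1}{2}}$ (using $\binom{m}{2}+r(k-m)=\binom{s}{2}+kr-\binom{r+1}{2}$), and set $a=k-r$, $e=d-r$, $N=n-2r$, turning the inner sum into $\sum_{s=0}^{e}(-1)^{s}q^{\binom{s}{2}}\left[a-s\atop e-s\right]^{2}\left[a\atop s\right]\left[N-s\atop a-s\right]$. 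The products of Gaussian coefficients collapse through the $q$-analogs of the subset-of-a-subset identities $\left[a\atop s\right]\left[a-s\atop e-s\right]=\left[a\atop e\right]\left[e\atop s\right]$ and $\left[a-s\atop e-s\right]\left[N-s\atop a-s\right]=\left[N-s\atop e-s\right]\left[N-e\atop a-e\right]$, which pull the factor $\left[a\atop e\right]\left[N-e\atop a-e\right]$ out of the sum and reduce the entire claim to the single identity
\[
\sum_{s=0}^{e}(-1)^{s}q^{\binom{s}{2}}\left[e\atop s\right]\left[N-s\atop e-s\right]=q^{e^{2}}\left[N-e\atop e\right].
\]

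To prove this last identity I would argue by generating functions, mirroring the classical proof of $\sum_{s}(-1)^{s}\binom{e}{s}\binom{N-s}{e-s}=\binom{N-e}{e}$. Reading $\left[N-s\atop e-s\right]$ as the coefficient $[z^{e-s}]\prod_{i=0}^{N-e}(1-q^{i}z)^{-1}$ via the second identity of Lemma~\ref{l10}, I would write $[z^{e-s}]=[z^{e}]z^{s}$, pull the sum inside, and recognize $\sum_{s=0}^{e}(-1)^{s}q^{\binom{s}{2}}\left[e\atop s\right]z^{s}=\prod_{i=0}^{e-1}(1-q^{i}z)$ by the first identity of Lemma~\ref{l10}. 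The numerator then cancels the first $e$ factors of the denominator—legitimate because $n\geq 2k$ forces $N-2e=n-2d\geq 0$—leaving $[z^{e}]\prod_{i=0}^{N-2e}(1-q^{e+i}z)^{-1}$; the substitution $w=q^{e}z$ and the second identity of Lemma~\ref{l10} extract exactly $q^{e^{2}}\left[N-e\atop e\right]$. I expect the bookkeeping of the $q$-power exponents through the substitution $m=r+s$ to be the most error-prone part, while the conceptual crux is setting up the generating-function cancellation, which is where the hypothesis $n\geq 2k$ enters.
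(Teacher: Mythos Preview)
Your proposal is correct and follows essentially the same route as the paper: expand $\overline{W}_{d,d}$ via Lemma~\ref{l3}(i), collapse $W_{m,d}W_{d,k}$ via Lemma~\ref{l3}(iii), diagonalize each $W_{m,k}^{T}W_{m,k}$ on the $U_{r}$ via Lemma~\ref{l4}, swap the sums, and reduce the inner coefficient to the identity $\sum_{s=0}^{e}(-1)^{s}q^{\binom{s}{2}}\left[e\atop s\right]\left[N-s\atop e-s\right]=q^{e^{2}}\left[N-e\atop e\right]$, which is exactly the paper's identity~\eqref{e11} after the substitution $s=i-r$, $e=d-r$, $N=n-2r$, and is proved the same way through the two generating-function identities of Lemma~\ref{l10}. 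The only cosmetic difference is that you reach \eqref{e11} via two applications of the $q$-trinomial revision $\left[X\atop Y\right]\left[Y\atop Z\right]=\left[X\atop Z\right]\left[X-Z\atop Y-Z\right]$, whereas the paper expands the ratio of Gaussian coefficients as an explicit product and cancels by hand.
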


\begin{proof}~
By Lemma~\ref{l3}(i),(iii), we have

\begin{equation}\label{e101}
\begin{array}{rl}
W_{d,k}^{T}\overline{W}_{d,d}W_{d,k}
\!\!\!\!&=W_{d,k}^{T}\left(\sum\limits_{i=0}^{d}(-1)^{i}q^{\binom{i}{2}}W_{i,d}^{T}W_{i,d}\right)W_{d,k}\\[.3cm]
&=\sum\limits_{i=0}^{d}(-1)^{i}q^{\binom{i}{2}}\left(W_{i,d}W_{d,k}\right)^{T}W_{i,d}W_{d,k}\\[.3cm]
&=\sum\limits_{i=0}^{d}(-1)^{i}q^{\binom{i}{2}}\left[k-i\atop d-i\right]^{2}W_{i,k}^{T}W_{i,k}.
\end{array}
\end{equation}
Then
\begin{equation*}
\begin{array}{rl}
&~~~\left(W_{d,k}\vec{h}\right)^{T}\overline{W}_{d,d}\left(W_{d,k}\vec{h}\right)\\[.3cm]
&=\vec{h}^{T}\left(W_{d,k}^{T}\overline{W}_{d,d}W_{d,k}\right)\vec{h}\\[.3cm]
&\overset{\eqref{e101}}{=}\vec{h}^{T}\left(\sum\limits_{i=0}^{d}(-1)^{i}q^{\binom{i}{2}}\left[k-i\atop d-i\right]^{2}W_{i,k}^{T}W_{i,k}\right)\vec{h}\\[.3cm]
&\overset{\eqref{e1}}{=}\left(\sum\limits_{j=0}^{k}\vec{h}_{j}^{T}\right)\left(\sum\limits_{i=0}^{d}(-1)^{i}q^{\binom{i}{2}}\left[k-i\atop d-i\right]^{2}W_{i,k}^{T}W_{i,k}\right)\left(\sum\limits_{r=0}^{k}\vec{h}_{r}\right)\\[.3cm]
&=\left(\sum\limits_{j=0}^{k}\vec{h}_{j}^{T}\right)\left(\sum\limits_{i=0}^{d}\sum\limits_{r=0}^{k}(-1)^{i}q^{\binom{i}{2}}\left[k-i\atop d-i\right]^{2}W_{i,k}^{T}W_{i,k}\vec{h}_{r}\right)\\[.3cm]
&\overset{\eqref{e9}}{=}\left(\sum\limits_{j=0}^{k}\vec{h}_{j}^{T}\right)\left(\sum\limits_{i=0}^{d}\sum\limits_{r=0}^{i}(-1)^{i}q^{\binom{i}{2}}\left[k-i\atop d-i\right]^{2}\cdot q^{r(k-i)}\left[k-r\atop k-i\right]\left[n-i-r\atop k-i\right]\vec{h}_{r}\right)\\[.3cm]
&=\sum\limits_{i=0}^{d}\sum\limits_{r=0}^{i}(-1)^{i}q^{\binom{i}{2}}q^{r(k-i)}\left[k-i\atop d-i\right]^{2}\left[k-r\atop k-i\right]\left[n-i-r\atop k-i\right]\|\vec{h}_{r}\|^{2}\\[.3cm]
&=\sum\limits_{i=0}^{d}(-1)^{i}q^{\binom{i}{2}}\left[k-i\atop d-i\right]^{2}\left(\sum\limits_{r=0}^{i}q^{r(k-i)}\left[k-r\atop k-i\right]\left[n-i-r\atop k-i\right]\|\vec{h}_{r}\|^{2}\right)\\[.3cm]
&=\sum\limits_{r=0}^{d}\|\vec{h}_{r}\|^{2}\left(\sum\limits_{i=r}^{d}(-1)^{i}q^{\binom{i}{2}+r(k-i)}\left[k-i\atop d-i\right]^{2}\left[k-r\atop k-i\right]\left[n-i-r\atop k-i\right]\right).
\end{array}
\end{equation*}

We only need to prove that
\begin{equation*}
\begin{array}{rl}
\sum\limits_{i=r}^{d}(-1)^{i}q^{\binom{i}{2}+r(k-i)}\left[k-i\atop d-i\right]^{2}\left[k-r\atop k-i\right]\left[n-i-r\atop k-i\right]=(-1)^{r}q^{kr+(d-r)^{2}-\binom{r+1}{2}}\left[k-r\atop d-r\right]\left[n-d-r\atop d-r\right]\left[n-d-r\atop k-d\right].
 \end{array}
\end{equation*}
Since
\begin{equation*}
\begin{array}{rl}
&~~~~\frac{\left[k-i\atop d-i\right]^{2}\left[k-r\atop k-i\right]\left[n-i-r\atop k-i\right]}{\left[k-r\atop d-r\right]\left[n-d-r\atop k-d\right]}\\[.3cm]
&=\frac{\prod\limits_{j=i}^{d-1}\left(q^{k-j}-1\right)^{2}\cdot\prod\limits_{j=r}^{r+k-i-1}\left(q^{k-j}-1\right)\cdot\prod\limits_{j=i+r}^{r+k-1}\left(q^{n-j}-1\right)\cdot\prod\limits_{j=r}^{d-1}\left(q^{d-j}-1\right)\cdot\prod\limits_{j=d}^{k-1}\left(q^{k-j}-1\right)}{\prod\limits_{j=i}^{d-1}\left(q^{d-j}-1\right)^{2}\cdot\prod\limits_{j=i}^{k-1}\left(q^{k-j}-1\right)^{2}\cdot\prod\limits_{j=r}^{d-1}\left(q^{k-j}-1\right)\cdot\prod\limits_{j=d+r}^{r+k-1}\left(q^{n-j}-1\right)}\\[.3cm]
&=\frac{\prod\limits_{j=i}^{d-1}\left(q^{k-j}-1\right)\cdot\prod\limits_{j=r}^{r+k-i-1}\left(q^{k-j}-1\right)\cdot\prod\limits_{j=i+r}^{r+d-1}\left(q^{n-j}-1\right)\cdot\prod\limits_{j=r}^{i-1}\left(q^{d-j}-1\right)}{\prod\limits_{j=i}^{d-1}\left(q^{d-j}-1\right)\cdot\prod\limits_{j=i}^{k-1}\left(q^{k-j}-1\right)\cdot\prod\limits_{j=r}^{d-1}\left(q^{k-j}-1\right)}\\[.3cm]
&=\frac{\prod\limits_{j=r}^{r+k-i-1}\left(q^{k-j}-1\right)\cdot\prod\limits_{j=i+r}^{r+d-1}\left(q^{n-j}-1\right)\cdot\prod\limits_{j=r}^{i-1}\left(q^{d-j}-1\right)}{\prod\limits_{j=i}^{d-1}\left(q^{d-j}-1\right)\cdot\prod\limits_{j=r}^{k-1}\left(q^{k-j}-1\right)}\\[.3cm]
&=\frac{\prod\limits_{j=i+r}^{r+d-1}\left(q^{n-j}-1\right)\cdot\prod\limits_{j=r}^{i-1}\left(q^{d-j}-1\right)}{\prod\limits_{j=i}^{d-1}\left(q^{d-j}-1\right)\cdot\prod\limits_{j=r+k-i}^{k-1}\left(q^{k-j}-1\right)}\\[.3cm]
&=\left[n-i-r\atop d-i\right]\left[d-r\atop i-r\right],
 \end{array}
\end{equation*}
the identity we would like to prove is equivalent to
\begin{equation}\label{e11}
\begin{array}{rl}
\sum\limits_{i=r}^{d}(-1)^{i-r}q^{\binom{i-r}{2}}\left[d-r\atop i-r\right]\left[n-i-r\atop d-i\right]=q^{(d-r)^{2}}\left[n-d-r\atop d-r\right].
 \end{array}
\end{equation}
Letting $z$ be an indeterminate, for $r\leq d< k$, obviously,
\begin{equation*}
\begin{array}{rl}
\frac{\prod\limits_{s=0}^{d-r-1}(1-q^{s}z)}{\prod\limits_{s=0}^{n-d-r}(1-q^{s}z)}=\frac{1}{\prod\limits_{s=d-r}^{n-d-r}(1-q^{s}z)},
\end{array}
\end{equation*}
which is equivalent to
\begin{equation*}
\begin{array}{rl}
\left(\sum\limits_{s=0}^{d-r}(-1)^{s}q^{\binom{s}{2}}\left[d-r\atop s\right]z^{s}\right)\left(\sum\limits_{t=0}^{\infty}\left[n-d-r+t\atop t\right]z^{t}\right)=\sum\limits_{m=0}^{\infty}\left[n-2d+m\atop m\right](q^{d-r}z)^{m}
 \end{array}
\end{equation*}
by Lemma~\ref{l10}. Then \eqref{e11} follows from comparing the coefficients of $z^{i-r}\cdot z^{d-i}=z^{d-r}~(r\leq i\leq d)$ in the above equality. This completes the proof.
\end{proof}

\section{ Proof of Theorem~\ref{4}}
In this section, we will prove Theorem\;\ref{4}. We first establish two lemmas that contain two inequalities involving $\|\vec{h}_{i}\|.$ The first inequality for intersecting families which follows from a Hoffman bound type argument \cite{H1}. The second inequality is derived directly from the assumption that $\delta_{d}(\mathcal{F})> \left[n-d-1\atop k-d-1\right]$.
\begin{lem}\label{l1}
Let $k>d\geq1, n\geq 2k+1$. Suppose that $\mathcal{F}\subseteq \left[V\atop k\right]$ is an intersecting family. Then
\begin{equation*}
0>-\frac{q^{k}-q^{d}}{q^{n}-q^{k}}\left[n-k\atop k\right]|\mathcal{F}|+\sum_{i=0}^{d}\left(\frac{q^{k}-q^{d}}{q^{n}-q^{k}}\left[n-k\atop k\right]+(-1)^{i}q^{\binom{i}{2}-ki}\left[n-k-i\atop k-i\right]\right)\|\vec{h}_{i}\|^{2}.
\end{equation*}
\end{lem}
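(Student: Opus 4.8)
The plan is to run a Hoffman/ratio-bound argument on the $q$-Kneser graph $G$. First I would observe that an intersecting family is exactly an independent set of $G$: if $F,F'\in\mathcal F$ then ${\rm dim}(F\cap F')\ge 1$, so $F,F'$ are non-adjacent, whence $\vec h^{T}A\vec h=0$, since $\vec h^{T}A\vec h$ equals $q^{-k^{2}}$ times twice the number of edges of $G$ inside $\mathcal F$. Inserting the orthogonal decomposition \eqref{e1} together with the eigenvalues \eqref{e2} turns this into $\sum_{i=0}^{k}\lambda_{i}\|\vec h_{i}\|^{2}=0$, where $\lambda_{i}=(-1)^{i}q^{\binom{i}{2}-ki}\left[n-k-i\atop k-i\right]$. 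I would also record the trivial mass identity $\sum_{i=0}^{k}\|\vec h_{i}\|^{2}=\|\vec h\|^{2}=|\mathcal F|$, and abbreviate $c:=\frac{q^{k}-q^{d}}{q^{n}-q^{k}}\left[n-k\atop k\right]$ for the constant in the statement.

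The heart of the matter is the eigenvalue-domination inequality
\[
c+\lambda_{i}\ge 0\qquad(d+1\le i\le k),
\]
which is the only place the hypothesis $n\ge 2k+1$ really enters. For even $i$ we have $\lambda_{i}\ge 0$ and there is nothing to prove, so the content is the odd case, where the claim reads $c\ge -\lambda_{i}=q^{\binom{i}{2}-ki}\left[n-k-i\atop k-i\right]$ (and is vacuous when no odd index lies in $[d+1,k]$, e.g.\ for small $k$). I would first check that $|\lambda_{i}|$ is strictly decreasing in $i$: using $\left[n-k-i\atop k-i\right]=\frac{q^{n-k-i}-1}{q^{k-i}-1}\left[n-k-i-1\atop k-i-1\right]$ the ratio $|\lambda_{i+1}|/|\lambda_{i}|$ collapses to $(1-q^{i-k})/(q^{n-k-i}-1)$, whose numerator is $<1$ while its denominator is $\ge q^{2}-1>1$ because $n\ge 2k+1$ and $i\le k-1$ force $q^{n-k-i}\ge q^{2}$. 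By monotonicity it then suffices to verify $c\ge -\lambda_{i_{0}}$ at the smallest odd $i_{0}\ge d+1$; since $d\ge 1$ we have $i_{0}\ge 3$, so $i_{0}=3$ is the worst case. Expanding $c/|\lambda_{3}|$ as a ratio of Gaussian binomial coefficients, cancelling, and then using $q^{n-k-1}-1\ge q^{k}-1$, $q^{n-k-2}-1\ge q^{k-1}-1$ (once more $n\ge 2k+1$) together with $q^{k}-q^{d}\ge q^{k-1}(q-1)$, I expect the product to reduce to something bounded below by $(q-1)q^{2k-2}\ge 1$. This cancellation is the most calculation-heavy step, wholly parallel to the identity verification in Lemma~\ref{l8}.

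With the domination inequality secured, the assembly is routine. Applying $\lambda_{i}\ge -c$ on the tail gives
\[
0=\sum_{i=0}^{k}\lambda_{i}\|\vec h_{i}\|^{2}\ge\sum_{i=0}^{d}\lambda_{i}\|\vec h_{i}\|^{2}-c\sum_{i=d+1}^{k}\|\vec h_{i}\|^{2},
\]
and substituting $\sum_{i=d+1}^{k}\|\vec h_{i}\|^{2}=|\mathcal F|-\sum_{i=0}^{d}\|\vec h_{i}\|^{2}$ followed by rearrangement yields exactly
\[
0\ge -c|\mathcal F|+\sum_{i=0}^{d}\bigl(c+\lambda_{i}\bigr)\|\vec h_{i}\|^{2}.
\]

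The main obstacle is upgrading this to the strict inequality in the statement. Tracing the single inequality used, and noting that the estimates above in fact give $c+\lambda_{i}>0$ strictly for every $i>d$, strictness holds \emph{iff} the tail projection does not vanish, i.e.\ $\sum_{i=d+1}^{k}\|\vec h_{i}\|^{2}>0$, equivalently $\vec h\notin{\rm rowsp}(W_{d,k})=U_{0}\oplus\cdots\oplus U_{d}$. I would therefore argue that the characteristic vector of the family under consideration has a nonzero component in $U_{d+1}\oplus\cdots\oplus U_{k}$. This is the delicate point, and it is genuinely where more than bare intersection is needed: for a star $\{F:E\le F\}$ with $E\in\left[V\atop 1\right]$ the vector $\vec h$ is a single row of $W_{1,k}$, hence lies in $U_{0}\oplus U_{1}$ and the tail projection vanishes. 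Consequently I expect the strict conclusion to rely on the family carrying additional structure (as it does in the intended application, where $\delta_{d}(\mathcal F)>\left[n-d-1\atop k-d-1\right]$ rules out such stars), rather than on intersection alone.
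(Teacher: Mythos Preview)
Your argument is essentially the paper's. The paper likewise uses $0=\vec h^{T}A\vec h=\sum_{i}\lambda_{i}\|\vec h_{i}\|^{2}$, bounds the tail by $-c\sum_{i>d}\|\vec h_{i}\|^{2}$, and substitutes $\sum_{i>d}\|\vec h_{i}\|^{2}=|\mathcal F|-\sum_{i\le d}\|\vec h_{i}\|^{2}$. The only organizational difference is in the eigenvalue comparison: rather than reducing to a single check at $i=3$, the paper proves (Lemma~\ref{l200} in the appendix) the two-step chain $q^{\binom{i}{2}-ki}\left[n-k-i\atop k-i\right]\le q^{\binom{d+1}{2}-k(d+1)}\left[n-k-d-1\atop k-d-1\right]<c$ for all $i\ge d+1$; the first step is precisely your monotonicity of $|\lambda_{i}|$, and the second is a short direct estimate in place of your proposed ratio calculation for $c/|\lambda_{3}|$.

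Your caution about strictness is in fact sharper than the paper's own proof. The paper simply writes ``$>$'' when passing from $-|\lambda_{d+1}|\sum_{i>d}\|\vec h_{i}\|^{2}$ to $-c\sum_{i>d}\|\vec h_{i}\|^{2}$, tacitly assuming $\sum_{i>d}\|\vec h_{i}\|^{2}>0$; as you correctly note, this fails for a full star (and for $\mathcal F=\emptyset$), so the lemma with strict inequality is not literally valid for every intersecting family. This does not affect the proof of Theorem~\ref{4}: there one only needs the non-strict form $0\ge -c|\mathcal F|+\sum_{i\le d}(c+\lambda_{i})\|\vec h_{i}\|^{2}$, because the companion inequality coming from Lemma~\ref{l2} is genuinely strict under the hypothesis $\delta_{d}(\mathcal F)>\left[n-d-1\atop k-d-1\right]$, and adding to a strict inequality a non-negative quantity (namely $-\tfrac{b_{1}}{a_{1}}$ times the non-strict one) preserves strictness.
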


\begin{proof}~
Since $\mathcal{F}$ is an intersecting family and $\vec{h}$ is the indicating vector of $\mathcal{F}$,
\begin{equation*}
|\mathcal{F}|=\langle \vec{h}, \vec{h}\rangle\overset{\eqref{e1}}{=}\left(\sum\limits_{i=0}^{k}\vec{h}_{i}^{T}\right)\left(\sum\limits_{j=0}^{k}\vec{h}_{j}\right)=\sum\limits_{i=0}^{k}\|\vec{h}_{i}\|^{2}.
\end{equation*}
Since $A$ is $q^{-k^{2}}$ times the adjacency matrix of the $q$-Kneser graph, we have $\vec{h}^{T}A\vec{h}=0$. Then
\begin{equation*}
\begin{array}{rl}
0\!\!\!\!&=\vec{h}^{T}A\vec{h}\\[.3cm]
&\overset{\eqref{e1}}{=}\left(\sum\limits_{i=0}^{k}\vec{h}_{i}^{T}\right)\left(\sum\limits_{j=0}^{k}A\vec{h}_{j}\right)\\[.3cm]
&=\left(\sum\limits_{i=0}^{k}\vec{h}_{i}^{T}\right)\left(\sum\limits_{j=0}^{k}\lambda_{j}\vec{h}_{j}\right)\\[.3cm]
&=\sum\limits_{i=0}^{k}\lambda_{i}\|\vec{h}_{i}\|^{2}\\[.3cm]
&\overset{\eqref{e2}}{=}\sum\limits_{i=0}^{k}(-1)^{i}q^{\binom{i}{2}-ki}\left[n-k-i\atop k-i\right]\|\vec{h}_{i}\|^{2}\\[.3cm]
&=\sum\limits_{i=0}^{d}(-1)^{i}q^{\binom{i}{2}-ki}\left[n-k-i\atop k-i\right]\|\vec{h}_{i}\|^{2}+\sum\limits_{i=d+1}^{k}(-1)^{i}q^{\binom{i}{2}-ki}\left[n-k-i\atop k-i\right]\|\vec{h}_{i}\|^{2}\\[.3cm]
&\geq\sum\limits_{i=0}^{d}(-1)^{i}q^{\binom{i}{2}-ki}\left[n-k-i\atop k-i\right]\|\vec{h}_{i}\|^{2}-q^{\binom{d+1}{2}-k(d+1)}\left[n-k-d-1\atop k-d-1\right]\sum\limits_{i=d+1}^{k}\|\vec{h}_{i}\|^{2}\\[.3cm]
&>\sum\limits_{i=0}^{d}(-1)^{i}q^{\binom{i}{2}-ki}\left[n-k-i\atop k-i\right]\|\vec{h}_{i}\|^{2}-\frac{q^{k}-q^{d}}{q^{n}-q^{k}}\left[n-k\atop k\right]\sum\limits_{i=d+1}^{k}\|\vec{h}_{i}\|^{2}\\[.3cm]
&=\sum\limits_{i=0}^{d}(-1)^{i}q^{\binom{i}{2}-ki}\left[n-k-i\atop k-i\right]\|\vec{h}_{i}\|^{2}-\frac{q^{k}-q^{d}}{q^{n}-q^{k}}\left[n-k\atop k\right]\left(|\mathcal{F}|-\sum\limits_{i=0}^{d}\|\vec{h}_{i}\|^{2}\right)\\[.3cm]
&=-\frac{q^{k}-q^{d}}{q^{n}-q^{k}}\left[n-k\atop k\right]|\mathcal{F}|+\sum\limits_{i=0}^{d}\left(\frac{q^{k}-q^{d}}{q^{n}-q^{k}}\left[n-k\atop k\right]+(-1)^{i}q^{\binom{i}{2}-ki}\left[n-k-i\atop k-i\right]\right)\|\vec{h}_{i}\|^{2}
\end{array}
\end{equation*}
by Lemma \ref{l200} in the Appendix.
\end{proof}

\begin{lem}\label{l2}
Suppose that $\mathcal{F}\subseteq \left[V\atop k\right]$ with $\delta_{d}(\mathcal{F})> \left[n-d-1\atop k-d-1\right]$. Then
\begin{equation*}
\begin{array}{rl}
0<\!\!\!\!&\sum\limits_{i=0}^{d}(-1)^{i}q^{ki+(d-i)^{2}-\binom{i+1}{2}}\left[k-i\atop d-i\right]\left[n-d-i\atop d-i\right]\left[n-d-i\atop k-d\right]\|\vec{h}_{r}\|^{2}-2q^{d^{2}}\left[k\atop d\right]\left[n-d\atop d\right]\left[n-d-1\atop k-d-1\right]\left|\mathcal{F}\right|\\[.3cm]
&+q^{d^{2}}\left[n\atop d\right]\left[n-d\atop d\right]\left[n-d-1\atop k-d-1\right]^{2}.
\end{array}
\end{equation*}
\end{lem}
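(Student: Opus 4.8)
The plan is to recognize the right-hand side of the asserted inequality as a single quadratic form evaluated at a \emph{shifted degree vector}, and then to observe that this form is manifestly positive under the hypothesis $\delta_{d}(\mathcal{F})>\left[n-d-1\atop k-d-1\right]$. So the argument is conceptual at its core and computational only in the bookkeeping.

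First I would set $\vec{g}:=W_{d,k}\vec{h}$. Its coordinate at a $d$-dimensional subspace $S$ is $\sum_{F:\,S\le F}(\vec{h})_{F}=d_{S}(\mathcal{F})$, so $\vec{g}$ is exactly the vector of $d$-degrees, and the hypothesis reads $(\vec{g})_{S}=d_{S}(\mathcal{F})\ge\delta_{d}(\mathcal{F})>\left[n-d-1\atop k-d-1\right]=:b$ for every $S$. By Lemma~\ref{l8} the first sum in the statement is precisely $\vec{g}^{T}\overline{W}_{d,d}\vec{g}$, the summation index $i$ playing the role of $r$ in \eqref{e10} (the subscript in $\|\vec{h}_{r}\|^{2}$ is a misprint for $\|\vec{h}_{i}\|^{2}$).

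Next I would let $\vec{1}\in\mathbb{R}^{\left[n\atop d\right]}$ be the all-ones vector over $d$-dimensional subspaces and expand, using that $\overline{W}_{d,d}$ is symmetric,
\[
(\vec{g}-b\vec{1})^{T}\overline{W}_{d,d}(\vec{g}-b\vec{1})=\vec{g}^{T}\overline{W}_{d,d}\vec{g}-2b\,\vec{1}^{T}\overline{W}_{d,d}\vec{g}+b^{2}\,\vec{1}^{T}\overline{W}_{d,d}\vec{1}.
\]
The two auxiliary inner products are routine double counts settled by Lemma~\ref{l21}: each row (and hence each column) sum of $\overline{W}_{d,d}$ equals $q^{d^{2}}\left[n-d\atop d\right]$, whence $\vec{1}^{T}\overline{W}_{d,d}\vec{1}=q^{d^{2}}\left[n\atop d\right]\left[n-d\atop d\right]$, while counting the incidences $(S,F)$ with $\dim S=d$, $S\le F\in\mathcal{F}$ in two ways gives $\sum_{S}d_{S}(\mathcal{F})=\left[k\atop d\right]|\mathcal{F}|$ and therefore $\vec{1}^{T}\overline{W}_{d,d}\vec{g}=q^{d^{2}}\left[k\atop d\right]\left[n-d\atop d\right]|\mathcal{F}|$. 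Substituting these three values shows that the right-hand side of the claimed inequality is exactly $(\vec{g}-b\vec{1})^{T}\overline{W}_{d,d}(\vec{g}-b\vec{1})$.

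Finally I would conclude by positivity. Since $(\overline{W}_{d,d})_{S,S'}\in\{0,1\}$ records trivial intersection,
\[
(\vec{g}-b\vec{1})^{T}\overline{W}_{d,d}(\vec{g}-b\vec{1})=\sum_{S\cap S'=\{\textbf{0}\}}\big(d_{S}(\mathcal{F})-b\big)\big(d_{S'}(\mathcal{F})-b\big),
\]
where the sum runs over ordered pairs of $d$-dimensional subspaces meeting trivially. Every factor is strictly positive by hypothesis, and the index set is nonempty (its size is $q^{d^{2}}\left[n\atop d\right]\left[n-d\atop d\right]>0$, since $n\ge 2k+1>2d$), so the whole sum is strictly positive, which is the stated inequality. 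The only genuine content here is the term-by-term identification; I expect the single mild obstacle to be the careful matching of the $q$-power exponents and the Gaussian-coefficient factors in the two double-counting identities against the form printed in the statement, rather than any real difficulty.
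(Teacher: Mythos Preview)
Your proposal is correct and follows essentially the same route as the paper: both proofs expand $\sum_{S\cap T=\{\textbf{0}\}}\big(d_{S}(\mathcal{F})-b\big)\big(d_{T}(\mathcal{F})-b\big)$ with $b=\left[n-d-1\atop k-d-1\right]$, identify the three resulting terms via Lemma~\ref{l21} and the double count $\sum_{S}d_{S}(\mathcal{F})=\left[k\atop d\right]|\mathcal{F}|$, and invoke Lemma~\ref{l8} for the quadratic-form term. Your packaging in terms of $\vec{g}=W_{d,k}\vec{h}$ and $(\vec{g}-b\vec{1})^{T}\overline{W}_{d,d}(\vec{g}-b\vec{1})$ is a slightly cleaner presentation of exactly the same computation, and your observation that $\|\vec{h}_{r}\|^{2}$ should read $\|\vec{h}_{i}\|^{2}$ is right.
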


\begin{proof}~We calculate
\begin{equation*}
\left|\left\{(S,F):S\in \left[V\atop d\right], F\in \mathcal{F}, S\leq F\right\}\right|
\end{equation*}
in two ways and get the following equation
\begin{equation}\label{e3}
\sum\limits_{S\in \left[V\atop d\right]}d_{S}(\mathcal{F})=\left[k\atop d\right]\left|\mathcal{F}\right|.
\end{equation}

We claim that
\begin{equation}\label{e106}
\begin{array}{rl}
\sum\limits_{S,T\in \left[V\atop d\right] \atop S\cap T=\{\textbf{0}\}}d_{S}(\mathcal{F})d_{T}=\left(W_{d,k}\vec{h}\right)^{T}\overline{W}_{d,d}\left(W_{d,k}\vec{h}\right).
\end{array}
\end{equation}
Since
\begin{equation*}
\begin{array}{rl}
\left(W_{d,k}\vec{h}\right)^{T}\overline{W}_{d,d}\left(W_{d,k}\vec{h}\right)
&=\sum\limits_{S,T\in \left[V\atop d\right]}\left(W_{d,k}\vec{h}\right)_{S}^{T}\left(\overline{W}_{d,d}\right)_{S,T}\left(W_{d,k}\vec{h}\right)_{T}\\[.3cm]
&=\sum\limits_{S,T\in \left[V\atop d\right]}d_{S}(\mathcal{F})\left(\overline{W}_{d,d}\right)_{S,T} d_{T}\\[.3cm]
&=\sum\limits_{S,T\in \left[V\atop d\right] \atop S\cap T=\{\textbf{0}\}}d_{S}(\mathcal{F})d_{T},
\end{array}
\end{equation*}
the claim holds. Since $\delta_{d}(\mathcal{F})> \left[n-d-1\atop k-d-1\right]$, we have
\begin{equation}\label{e4}
d_{S}(\mathcal{F})> \left[n-d-1\atop k-d-1\right]
\end{equation}
for any $S\in \left[V\atop d\right]$. Then

\begin{equation*}
\begin{array}{rl}
0\!\!\!\!&\overset{\eqref{e4}}{<}\sum\limits_{S,T\in \left[V\atop d\right] \atop S\cap T=\{\textbf{0}\}}\left(d_{S}(\mathcal{F})-\left[n-d-1\atop k-d-1\right]\right)\left(d_{T}-\left[n-d-1\atop k-d-1\right]\right)\\[.3cm]
&=\sum\limits_{S,T\in \left[V\atop d\right] \atop S\cap T=\{\textbf{0}\}}\left(d_{S}(\mathcal{F})d_{T}-\left(d_{S}(\mathcal{F})+d_{T}\right)\left[n-d-1\atop k-d-1\right]\right)\\
& \qquad +\left|\left\{(S,T):S,T\in \left[V\atop d\right], S\cap T=\{\textbf{0}\}\right\}\right|\cdot\left[n-d-1\atop k-d-1\right]^{2}\\[.3cm]
&=\sum\limits_{S,T\in \left[V\atop d\right] \atop S\cap T=\{\textbf{0}\}}d_{S}(\mathcal{F})d_{T}-\sum\limits_{S,T\in \left[V\atop d\right] \atop S\cap T=\{\textbf{0}\}}\left(d_{S}(\mathcal{F})+d_{T}\right)\left[n-d-1\atop k-d-1\right]+q^{d^{2}}\left[n\atop d\right]\left[n-d\atop d\right]\left[n-d-1\atop k-d-1\right]^{2}\\[.3cm]
&=\sum\limits_{S,T\in \left[V\atop d\right] \atop S\cap T=\{\textbf{0}\}}d_{S}(\mathcal{F})d_{T}-2\left[n-d-1\atop k-d-1\right]\sum\limits_{S\in \left[V\atop d\right]}\left(d_{S}(\mathcal{F})\cdot\left|\left\{T\in \left[V\atop d\right]:S\cap T=\{\textbf{0}\}\right\}\right|\right)\\
& \qquad +q^{d^{2}}\left[n\atop d\right]\left[n-d\atop d\right]\left[n-d-1\atop k-d-1\right]^{2}\\[.3cm]
&\overset{\eqref{e106}}{=}\left(W_{d,k}\vec{h}\right)^{T}\overline{W}_{d,d}\left(W_{d,k}\vec{h}\right)-2q^{d^{2}}\left[n-d\atop d\right]\left[n-d-1\atop k-d-1\right]\sum\limits_{S\in \left[V\atop d\right]}d_{S}(\mathcal{F})\\
& \qquad +q^{d^{2}}\left[n\atop d\right]\left[n-d\atop d\right]\left[n-d-1\atop k-d-1\right]^{2}\\[.3cm]
&\overset{\eqref{e3}}{=}\left(W_{d,k}\vec{h}\right)^{T}\overline{W}_{d,d}\left(W_{d,k}\vec{h}\right)-2q^{d^{2}}\left[k\atop d\right]\left[n-d\atop d\right]\left[n-d-1\atop k-d-1\right]\left|\mathcal{F}\right|+q^{d^{2}}\left[n\atop d\right]\left[n-d\atop d\right]\left[n-d-1\atop k-d-1\right]^{2}
\end{array}
\end{equation*}
by Lemma~\ref{l21}. Then the result holds by \eqref{e10}.
\end{proof}

\noindent\emph{{\textbf{Proof of Theorem~\ref{4}.}}}~
Suppose that $\mathcal{F}\subseteq \left[V\atop k\right]$ is an intersecting family with $\delta_{d}(\mathcal{F})> \left[n-d-1\atop k-d-1\right]$. Let
\begin{equation}\label{a}
a_{i}=\frac{q^{k}-q^{d}}{q^{n}-q^{k}}\left[n-k\atop k\right]+(-1)^{i}q^{\binom{i}{2}-ki}\left[n-k-i\atop k-i\right],
\end{equation}
\begin{equation}\label{b}
b_{i}=(-1)^{i}q^{ki+(d-i)^{2}-\binom{i+1}{2}}\left[k-i\atop d-i\right]\left[n-d-i\atop d-i\right]\left[n-d-i\atop k-d\right],
\end{equation}
\begin{equation}\label{cf}
c=\frac{q^{k}-q^{d}}{q^{n}-q^{k}}\left[n-k\atop k\right], f=2q^{d^{2}}\left[k\atop d\right]\left[n-d\atop d\right]\left[n-d-1\atop k-d-1\right],
\end{equation}
\begin{equation*}
g=q^{d^{2}}\left[n\atop d\right]\left[n-d\atop d\right]\left[n-d-1\atop k-d-1\right]^{2},
\end{equation*}
we have
\begin{equation}\label{e12}
0>\sum\limits_{i=0}^{d}a_{i}\|\vec{h_{i}}\|^{2}-c|\mathcal{F}|,
\end{equation}
\begin{equation}\label{e13}
0<\sum\limits_{i=0}^{d}b_{i}\|\vec{h_{i}}\|^{2}-f|\mathcal{F}|+g
\end{equation}
by Lemmas~\ref{l1}, \ref{l2}. It is easily checked that $b_{1}, a_{1}=\frac{1-q^{d}}{q^{n}-q^{k}}\left[n-k\atop k\right]$ are negative. We subtract \eqref{e12} multiplied by $\frac{b_{1}}{a_{1}}$ from \eqref{e13} and then
\begin{equation*}
0<\left(b_{0}-\frac{a_{0}b_{1}}{a_{1}}\right)\|\vec{h_{0}}\|^{2}+\sum\limits_{i=2}^{d}\left(b_{i}-\frac{a_{i}b_{1}}{a_{1}}\right)\|\vec{h_{i}}\|^{2}-\left(f-\frac{cb_{1}}{a_{1}}\right)|\mathcal{F}|+g.
\end{equation*}
We have
\begin{equation}\label{e14}
0<\left(b_{0}-\frac{a_{0}b_{1}}{a_{1}}\right)\|\vec{h_{0}}\|^{2}-\left(f-\frac{cb_{1}}{a_{1}}\right)|\mathcal{F}|+g
\end{equation}
by Lemma \ref{c4} in the Appendix. Since
\begin{equation*}
\begin{array}{rl}
\|\vec{h_{0}}\|^{2}=\langle\vec{h_{0}}, \frac{\vec{1}}{\sqrt{\left[n\atop k\right]}}\rangle\langle\frac{\vec{1}}{\sqrt{\left[n\atop k\right]}}, \vec{h_{0}}\rangle=\langle\vec{h_{0}}, \frac{\vec{1}}{\sqrt{\left[n\atop k\right]}}\rangle^{2}=\langle\vec{h}, \frac{\vec{1}}{\sqrt{\left[n\atop k\right]}}\rangle^{2}=\frac{(\vec{h}^{T}\vec{1})^{2}}{\left[n\atop k\right]}=\frac{|\mathcal{F}|^{2}}{\left[n\atop k\right]},
\end{array}
\end{equation*}
applying Lemmas \ref{c1}, \ref{c2} in the Appendix yields that
\begin{equation*}
\begin{array}{rl}
&~~~~~~\!0\overset{\eqref{e14}}{<}\left(b_{0}-\frac{a_{0}b_{1}}{a_{1}}\right)\frac{|\mathcal{F}|^{2}}{\left[n\atop k\right]}-\left(f-\frac{cb_{1}}{a_{1}}\right)|\mathcal{F}|+g\\[.3cm]
&\Leftrightarrow 0<\frac{q^{d^{2}}(q^{k-d}-1)(q^{n}-1)}{(q^{k}-1)(q^{n-d}-1)}\left[k\atop d\right]\left[n-d\atop k-d\right]\left[n-d\atop d\right]\frac{|\mathcal{F}|^{2}}{\left[n\atop k\right]}+q^{d^{2}}\left[n\atop d\right]\left[n-d\atop d\right]\left[n-d-1\atop k-d-1\right]^{2}\\[.3cm]
&~~~~~~~~~~~~~-\frac{q^{d^{2}}(2q^{k+n-d}-q^{n}-q^{n-d}-q^{k}-q^{k-d}+2)}{(q^{k}-1)(q^{n-d}-1)}\left[k\atop d\right]\left[n-d-1\atop k-d-1\right]\left[n-d\atop d\right]|\mathcal{F}|\\[.3cm]
&\Leftrightarrow 0<q^{d^{2}}\left[k\atop d\right]\left[n-d-1\atop k-d-1\right]\left[n-d\atop d\right]\frac{|\mathcal{F}|^{2}}{\left[n-1\atop k-1\right]}+q^{d^{2}}\left[n\atop d\right]\left[n-d\atop d\right]\left[n-d-1\atop k-d-1\right]^{2}\\[.3cm]
&~~~~~~~~~~~~~-\frac{q^{d^{2}}(2q^{k+n-d}-q^{n}-q^{n-d}-q^{k}-q^{k-d}+2)}{(q^{k}-1)(q^{n-d}-1)}\left[k\atop d\right]\left[n-d-1\atop k-d-1\right]\left[n-d\atop d\right]|\mathcal{F}|\\[.3cm]
&\Leftrightarrow 0<|\mathcal{F}|^{2}+\left[n-1\atop k-1\right]\cdot\frac{\left[n-d-1\atop k-d-1\right]\left[n\atop d\right]}{\left[k\atop d\right]}-\left[n-1\atop k-1\right]\cdot\frac{2q^{k+n-d}-q^{n}-q^{n-d}-q^{k}-q^{k-d}+2}{(q^{k}-1)(q^{n-d}-1)}|\mathcal{F}|\\[.3cm]
&\Leftrightarrow 0<|\mathcal{F}|^{2}+\left[n-1\atop k-1\right]\cdot\frac{\left[n-d-1\atop k-d-1\right]\left[n\atop d\right]}{\left[k\atop d\right]}-\left[n-1\atop k-1\right]\left(1+\frac{(q^{k-d}-1)(q^{n}-1)}{(q^{k}-1)(q^{n-d}-1)}\right)|\mathcal{F}|\\[.3cm]
&\Leftrightarrow 0<|\mathcal{F}|^{2}+\left[n-1\atop k-1\right]\cdot\frac{\left[n-d-1\atop k-d-1\right]\left[n\atop d\right]}{\left[k\atop d\right]}-\left(\left[n-1\atop k-1\right]+\left[n\atop k\right]\cdot\frac{q^{k-d}-1}{q^{n-d}-1}\right)|\mathcal{F}|\\[.3cm]
&\Leftrightarrow 0<|\mathcal{F}|^{2}+\left[n-1\atop k-1\right]\cdot\frac{\left[n-d-1\atop k-d-1\right]\left[n\atop d\right]}{\left[k\atop d\right]}-\left(\left[n-1\atop k-1\right]+\frac{\left[n-d-1\atop k-d-1\right]\left[n\atop d\right]}{\left[k\atop d\right]}\right)|\mathcal{F}|\\[.3cm]
&\Leftrightarrow 0<\left(|\mathcal{F}|-\left[n-1\atop k-1\right]\right)\left(|\mathcal{F}|-\frac{\left[n-d-1\atop k-d-1\right]\left[n\atop d\right]}{\left[k\atop d\right]}\right).
\end{array}
\end{equation*}
Since $\delta_{d}(\mathcal{F})> \left[n-d-1\atop k-d-1\right]$,
\begin{equation*}
|\mathcal{F}|\overset{\eqref{e3}}{=}\frac{\sum\limits_{S\in \left[V\atop d\right]}d_{S}(\mathcal{F})}{\left[k\atop d\right]}>\frac{\left[n\atop d\right]\left[n-d-1\atop k-d-1\right]}{\left[k\atop d\right]}.
\end{equation*}
Then we have $|\mathcal{F}|>\left[n-1\atop k-1\right]$, contradicting Theorem \ref{BBC}.
\qed
\section{ Concluding remarks }
In the present paper, we obtained a $d$-degree version of the Erd\H{o}s-Ko-Rado theorem for finite vector spaces. We proved that for $n\geq 2k+1$, the upper bound of the minimum $d$-degree of any intersecting family is $\left[n-d-1\atop k-d-1\right]$. It is very interesting to continue to study the structure of the intersecting family that reaches the upper bound.

%We say that two nonempty families $\mathcal{A}$ and $\mathcal{B}$ in $\mathcal{L}(V)$ are {\it cross-intersecting} if ${\rm dim}(S\cap T)\geq 1$ for all $S\in \mathcal{A}, T\in \mathcal{B}$.
 In \cite{deg}, Huang and Zhao combined a result from discrete geometry with eigenvalues of the Kneser graph to prove a degree version of Pyber's result \cite{Pyber} for cross-intersecting families of sets. It deserves to explore a degree version for cross-intersecting families in vector spaces using spectral graph theory.
\appendix
\setcounter{cla}{0}
\renewcommand{\thecla}{\arabic{cla}}
\section{Appendix}
%\section{Lemmas}
In this appendix we mainly prove some new Lemmas which are essential for our proofs. Firstly, we prove a lemma which compares three Gaussian binomial coefficients.
\begin{lem}\label{l200}
Let $k\geq i\geq d+1, n\geq 2k+1$. We have
\begin{equation*}
q^{\binom{i}{2}-ki}\left[n-k-i\atop k-i\right]\leq q^{\binom{d+1}{2}-k(d+1)}\left[n-k-d-1\atop k-d-1\right]< \frac{q^{k}-q^{d}}{q^{n}-q^{k}}\left[n-k\atop k\right]
\end{equation*}
\end{lem}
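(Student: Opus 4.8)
The statement packages two inequalities, and the plan is to prove them separately; both amount to elementary estimates on powers of $q$. Recognizing that these quantities are, up to sign, the eigenvalues $\lambda_i$ of the matrix $A$ from \eqref{e2}, the lemma is really the spectral-gap estimate $|\lambda_i|\le|\lambda_{d+1}|<\frac{q^{k}-q^{d}}{q^{n}-q^{k}}\lambda_0$ for $i\ge d+1$ that powers the Hoffman-type bound in Lemma~\ref{l1}. Set $f(i)=q^{\binom{i}{2}-ki}\left[n-k-i\atop k-i\right]$, so the left inequality reads $f(i)\le f(d+1)$ for $d+1\le i\le k$, and with $L=f(d+1)$ and $R=\frac{q^{k}-q^{d}}{q^{n}-q^{k}}\left[n-k\atop k\right]$ the right inequality reads $L<R$.

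For the left inequality I would show that $f$ is strictly decreasing on $d+1\le i\le k$, so that its maximum is $f(d+1)$. The definition $\left[m\atop r\right]_q=\prod_{t=0}^{r-1}\frac{q^{m-t}-1}{q^{r-t}-1}$ yields the one-step recurrence $\left[a\atop b\right]=\frac{q^{a}-1}{q^{b}-1}\left[a-1\atop b-1\right]$, whence
\[
\frac{f(i+1)}{f(i)}=q^{\,i-k}\cdot\frac{q^{k-i}-1}{q^{n-k-i}-1}=\frac{1-q^{\,i-k}}{q^{n-k-i}-1}.
\]
This is less than $1$ exactly when $q^{n-k-i}+q^{\,i-k}>2$, and for $i\le k-1$ we have $n-k-i\ge n-2k+1\ge 2$, so $q^{n-k-i}\ge q^{2}\ge 4$ and the ratio is indeed below $1$. (When $k=d+1$ the range is a single point and the inequality is an equality.)

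For the right inequality the plan is to compute $L/R$ exactly and then bound it. Using the product formula, the Gaussian-binomial quotient reduces to a finite product,
\[
\frac{\left[n-k-d-1\atop k-d-1\right]}{\left[n-k\atop k\right]}=\prod_{s=0}^{d}\frac{q^{k-s}-1}{q^{n-k-s}-1},
\]
so that $\frac{L}{R}=q^{\binom{d+1}{2}-k(d+1)}\cdot\frac{q^{n}-q^{k}}{q^{k}-q^{d}}\cdot\prod_{s=0}^{d}\frac{q^{k-s}-1}{q^{n-k-s}-1}$. I would then apply the elementary bound $\frac{q^{a}-1}{q^{b}-1}\le q^{a-b}$ (valid for $a\le b$, strict for $a<b$) to each of the $d+1$ factors, with $a=k-s\le b=n-k-s$, each contributing $q^{2k-n}$, together with the crude estimate $\frac{q^{n}-q^{k}}{q^{k}-q^{d}}<2q^{n-k}$ coming from $q^{k-d}-1\ge\frac12 q^{k-d}$. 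Collecting the powers of $q$ yields a total exponent $\binom{d+1}{2}+d(k-n)$, which $n\ge 2k+1$ forces down to at most $\binom{d}{2}-dk$, so $\frac{L}{R}<2\,q^{\binom{d}{2}-dk}$. Finally $dk-\binom{d}{2}=d\cdot\frac{2k-d+1}{2}\ge 2$ for $k>d\ge 1$, hence $2\,q^{\binom{d}{2}-dk}\le 2q^{-2}\le\tfrac12<1$, and $L<R$ follows.

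The monotonicity step disposes of the first inequality with no real difficulty; the main obstacle is the exponent bookkeeping in the second. The delicate points are establishing the product identity for the binomial quotient, and then fusing the prefactor exponent $\binom{d+1}{2}-k(d+1)$, the $(d+1)$-fold factor $q^{2k-n}$, and the contribution $2q^{n-k}$ into the single exponent $\binom{d+1}{2}+d(k-n)$ before invoking $n\ge 2k+1$. Strictness is not lost anywhere, since $n>2k$ makes every use of $\frac{q^{a}-1}{q^{b}-1}\le q^{a-b}$ strict.
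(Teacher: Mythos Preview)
Your proof is correct. For the first inequality you argue by one-step monotonicity of $f(i)$, which is essentially the paper's approach (the paper bounds the ratio $f(d+1)/f(i)$ in one shot via $\left[n-k-d-1\atop k-d-1\right]>\left[n-k-i\atop k-i\right]$ together with the exponent comparison, but the content is the same). For the second inequality your route is slightly cruder than the paper's: the paper notices that the prefactor $\frac{q^{k}-q^{d}}{q^{n}-q^{k}}$ cancels \emph{exactly} against one factor of the product, namely $\frac{q^{n-k}-1}{q^{k-d}-1}$, to give $q^{d-k}$, and that the remaining $d$ factors $\frac{q^{n-k-1-j}-1}{q^{k-j}-1}$ are each $\ge 1$ since $n\ge 2k+1$; this reduces the claim to the trivial $q^{d-k}>q^{\binom{d+1}{2}-k(d+1)}$. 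You instead bound all $d+1$ factors by $q^{2k-n}$ and the prefactor by $2q^{n-k}$, picking up a stray factor of $2$ and more exponent bookkeeping, but the conclusion $L/R<2q^{-2}\le \tfrac12$ is equally valid. The paper's cancellation is tighter and slicker; yours is more mechanical but entirely sound.
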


\begin{proof}~We first prove the first inequality. If $i=d+1$, it is obvious that the inequality holds. If $i>d+1$, we have
\begin{equation*}
\begin{array}{rl}
\frac{q^{\binom{d+1}{2}-k(d+1)}\left[n-k-d-1\atop k-d-1\right]}{q^{\binom{i}{2}-ki}\left[n-k-i\atop k-i\right]}\!\!\!\!&>q^{\binom{d+1}{2}-\binom{i}{2}+k(i-d-1)}\\[.3cm]
&=q^{(k-\frac{d+i}{2})(i-d-1)}\\[.3cm]
&>1
\end{array}
\end{equation*}
by $\left[n-k-d-1\atop k-d-1\right]=\prod\limits_{j=d+1}^{i-1}\frac{q^{n-k-j}-1}{q^{k-j}-1}\left[n-k-i\atop k-i\right]> \left[n-k-i\atop k-i\right]$, $k>d$ and $k\geq i$. Since $n\geq 2k+1$,
\begin{equation*}
\begin{array}{rl}
\frac{q^{k}-q^{d}}{q^{n}-q^{k}}\left[n-k\atop k\right]\!\!\!\!&=\frac{q^{k}-q^{d}}{q^{n}-q^{k}}\prod\limits_{j=0}^{d}\frac{q^{n-k-j}-1}{q^{k-j}-1}\left[n-k-d-1\atop k-d-1\right]\\[.3cm]
&= \frac{q^{k}-q^{d}}{q^{n}-q^{k}}\cdot \frac{q^{n-k}-1}{q^{k-d}-1}\prod\limits_{j=0}^{d-1}\frac{q^{n-k-1-j}-1}{q^{k-j}-1}\left[n-k-d-1\atop k-d-1\right]\\[.3cm]
&\geq q^{d-k}\left[n-k-d-1\atop k-d-1\right]\\[.3cm]
&>q^{d\left(\frac{d+1}{2}-k\right)-k}\left[n-k-d-1\atop k-d-1\right]\\[.3cm]
&=q^{\binom{d+1}{2}-k(d+1)}\left[n-k-d-1\atop k-d-1\right].\\[.3cm]
\end{array}
\end{equation*}
\end{proof}

\begin{lem}\label{c1}
Let $i=0, 1$, $a_{i}, b_{i}$ be defined in \eqref{a} and \eqref{b} respectively. Then
\begin{equation*}
b_{0}-\frac{a_{0}b_{1}}{a_{1}}=\frac{q^{d^{2}}(q^{k-d}-1)(q^{n}-1)}{(q^{k}-1)(q^{n-d}-1)}\left[k\atop d\right]\left[n-d\atop k-d\right]\left[n-d\atop d\right].
\end{equation*}
\end{lem}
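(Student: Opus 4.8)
The plan is to substitute $i=0,1$ into the defining expressions \eqref{a} and \eqref{b}, collapse everything onto a single Gaussian-binomial product times a rational function of $q$, and close with one short scalar identity. First I would record the four quantities explicitly. For $i=0$ the eigenvalue term in \eqref{a} is simply $\left[n-k\atop k\right]$, so combining it with the leading fraction through $1+\frac{q^{k}-q^{d}}{q^{n}-q^{k}}=\frac{q^{n}-q^{d}}{q^{n}-q^{k}}$ gives $a_{0}=\frac{q^{n}-q^{d}}{q^{n}-q^{k}}\left[n-k\atop k\right]$. The value $a_{1}=\frac{1-q^{d}}{q^{n}-q^{k}}\left[n-k\atop k\right]$ is already noted in the text (it drops out after writing $\left[n-k\atop k\right]=\frac{q^{n-k}-1}{q^{k}-1}\left[n-k-1\atop k-1\right]$). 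Consequently the common factor $\left[n-k\atop k\right]$ cancels and the ratio collapses to $\frac{a_{0}}{a_{1}}=\frac{q^{n}-q^{d}}{1-q^{d}}=-\frac{q^{d}(q^{n-d}-1)}{q^{d}-1}$.

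Next I would write out $b_{0}=q^{d^{2}}\left[k\atop d\right]\left[n-d\atop d\right]\left[n-d\atop k-d\right]$ and, simplifying the exponent $k+(d-1)^{2}-1=d^{2}+k-2d$, $b_{1}=-q^{d^{2}+k-2d}\left[k-1\atop d-1\right]\left[n-d-1\atop d-1\right]\left[n-d-1\atop k-d\right]$. The aim is to express $\frac{a_{0}b_{1}}{a_{1}}$ over the same product $\left[k\atop d\right]\left[n-d\atop d\right]\left[n-d\atop k-d\right]$ that appears in $b_{0}$. This is accomplished by the three single-step Gaussian identities $\left[k-1\atop d-1\right]=\frac{q^{d}-1}{q^{k}-1}\left[k\atop d\right]$, $\left[n-d-1\atop d-1\right]=\frac{q^{d}-1}{q^{n-d}-1}\left[n-d\atop d\right]$, and $\left[n-d-1\atop k-d\right]=\frac{q^{n-k}-1}{q^{n-d}-1}\left[n-d\atop k-d\right]$. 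After substituting these and multiplying by $\frac{a_{0}}{a_{1}}$, the factors $(q^{d}-1)$ and $(q^{n-d}-1)$ cancel down, leaving $\frac{a_{0}b_{1}}{a_{1}}=q^{d^{2}}\cdot\frac{q^{k-d}(q^{d}-1)(q^{n-k}-1)}{(q^{k}-1)(q^{n-d}-1)}\left[k\atop d\right]\left[n-d\atop d\right]\left[n-d\atop k-d\right]$.

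Finally I would factor $q^{d^{2}}\left[k\atop d\right]\left[n-d\atop d\right]\left[n-d\atop k-d\right]$ out of $b_{0}-\frac{a_{0}b_{1}}{a_{1}}$, reducing the claim to the scalar identity $1-\frac{q^{k-d}(q^{d}-1)(q^{n-k}-1)}{(q^{k}-1)(q^{n-d}-1)}=\frac{(q^{k-d}-1)(q^{n}-1)}{(q^{k}-1)(q^{n-d}-1)}$. Clearing the common denominator, this is exactly $(q^{k}-1)(q^{n-d}-1)-q^{k-d}(q^{d}-1)(q^{n-k}-1)=(q^{k-d}-1)(q^{n}-1)$; expanding the left-hand side yields $q^{k+n-d}-q^{n}-q^{k-d}+1$, which factors as $q^{n}(q^{k-d}-1)-(q^{k-d}-1)=(q^{k-d}-1)(q^{n}-1)$, completing the verification. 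There is no conceptual obstacle here; the only thing demanding care is the bookkeeping of the three binomial ratios together with the sign of $a_{1}$ and the exponents of $q$ in $b_{1}$ and in $\frac{a_{0}}{a_{1}}$, so that all powers consolidate into the single prefactor $q^{d^{2}}$ of the final answer.
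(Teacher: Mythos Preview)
Your proposal is correct and follows essentially the same route as the paper's proof: both compute $a_{0},a_{1},b_{0},b_{1}$ explicitly, use the three single-step Gaussian identities to bring everything over the common product $q^{d^{2}}\left[k\atop d\right]\left[n-d\atop d\right]\left[n-d\atop k-d\right]$, and finish with the scalar identity $1-\frac{q^{k-d}(q^{d}-1)(q^{n-k}-1)}{(q^{k}-1)(q^{n-d}-1)}=\frac{(q^{k-d}-1)(q^{n}-1)}{(q^{k}-1)(q^{n-d}-1)}$. The only cosmetic difference is that the paper factors $b_{0}$ out first and simplifies the ratio $\frac{a_{0}}{a_{1}}\cdot\frac{b_{1}}{b_{0}}$, whereas you compute $\frac{a_{0}b_{1}}{a_{1}}$ directly and then subtract; the arithmetic and the final verification are identical.
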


\begin{proof}~By \eqref{a} and \eqref{b}, we have
\begin{equation*}
\begin{array}{rl}
a_{0}=\frac{q^{n}-q^{d}}{q^{n}-q^{k}}\left[n-k\atop k\right],~a_{1}=\frac{1-q^{d}}{q^{n}-q^{k}}\left[n-k\atop k\right],
\end{array}
\end{equation*}
\begin{equation*}
\begin{array}{rl}
b_{0}=q^{d^{2}}\left[k\atop d\right]\left[n-d\atop k-d\right]\left[n-d\atop d\right],~b_{1}=-q^{k+(d-1)^{2}-1}\left[k-1\atop d-1\right]\left[n-d-1\atop d-1\right]\left[n-d-1\atop k-d\right].
\end{array}
\end{equation*}
Then,
\begin{equation*}
\begin{array}{rl}
&~~~~b_{0}-\frac{a_{0}b_{1}}{a_{1}}\\[.3cm]
&=b_{0}\left(1-\frac{a_{0}}{a_{1}}\cdot\frac{b_{1}}{b_{0}}\right)\\[.3cm]
&=b_{0}\left(1-\frac{q^{n}-q^{d}}{1-q^{d}}\cdot\frac{-q^{k+(d-1)^{2}-1}\left[k-1\atop d-1\right]\left[n-d-1\atop d-1\right]\left[n-d-1\atop k-d\right]}{q^{d^{2}}\left[k\atop d\right]\left[n-d\atop d\right]\left[n-d\atop k-d\right]}\right)\\[.3cm]
&=b_{0}\left(1-\frac{q^{n}-q^{d}}{q^{d}-1}\cdot q^{k-2d}\cdot\frac{q^{d}-1}{q^{k}-1}\cdot\frac{q^{d}-1}{q^{n-d}-1}\cdot\frac{q^{n-k}-1}{q^{n-d}-1}\right)\\[.3cm]
&=b_{0}\left(1-q^{k-d}\cdot\frac{q^{d}-1}{q^{k}-1}\cdot\frac{q^{n-k}-1}{q^{n-d}-1}\right)\\[.3cm]
&=b_{0}\cdot\frac{(q^{k-d}-1)(q^{n}-1)}{(q^{k}-1)(q^{n-d}-1)},
\end{array}
\end{equation*}
the desired result follows.
\end{proof}

\begin{lem}\label{c2}
Let $a_{1}, b_{1}$, $f, c$ be defined in \eqref{a}, \eqref{b} and \eqref{cf} respectively. Then
\begin{equation*}
f-\frac{cb_{1}}{a_{1}}=\frac{q^{d^{2}}(2q^{k+n-d}-q^{n}-q^{n-d}-q^{k}-q^{k-d}+2)}{(q^{k}-1)(q^{n-d}-1)}\left[k\atop d\right]\left[n-d-1\atop k-d-1\right]\left[n-d\atop d\right].
\end{equation*}
\end{lem}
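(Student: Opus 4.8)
The plan is to reduce everything to the explicit closed forms of $a_1$ and $b_1$ that were already extracted in the proof of Lemma~\ref{c1}, and then to carry out a direct but careful simplification. Recall from there that
\begin{equation*}
a_{1}=\frac{1-q^{d}}{q^{n}-q^{k}}\left[n-k\atop k\right],\qquad b_{1}=-q^{k+(d-1)^{2}-1}\left[k-1\atop d-1\right]\left[n-d-1\atop d-1\right]\left[n-d-1\atop k-d\right],
\end{equation*}
while $c=\frac{q^{k}-q^{d}}{q^{n}-q^{k}}\left[n-k\atop k\right]$ and $f=2q^{d^{2}}\left[k\atop d\right]\left[n-d\atop d\right]\left[n-d-1\atop k-d-1\right]$ by \eqref{cf}. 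The first observation is that $c$ and $a_{1}$ share the common factor $\frac{1}{q^{n}-q^{k}}\left[n-k\atop k\right]$, so the ratio collapses at once: $\frac{c}{a_{1}}=\frac{q^{k}-q^{d}}{1-q^{d}}=-\frac{q^{d}(q^{k-d}-1)}{q^{d}-1}$. This is the step that makes the whole computation tractable, since it eliminates the awkward factor $\left[n-k\atop k\right]$ entirely.

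Next I would rewrite $\frac{cb_{1}}{a_{1}}$ so that it carries exactly the same Gaussian binomial product as $f$. The key tools are three elementary one-step identities for Gaussian coefficients, namely $\left[k-1\atop d-1\right]=\frac{q^{d}-1}{q^{k}-1}\left[k\atop d\right]$, $\left[n-d-1\atop d-1\right]=\frac{q^{d}-1}{q^{n-d}-1}\left[n-d\atop d\right]$, and $\left[n-d-1\atop k-d\right]=\frac{q^{n-k}-1}{q^{k-d}-1}\left[n-d-1\atop k-d-1\right]$. Substituting these into $b_{1}$ and combining with the factor $-\frac{q^{d}(q^{k-d}-1)}{q^{d}-1}$ from $\frac{c}{a_{1}}$, the two sign factors multiply to $+1$, one $(q^{d}-1)$ and the $(q^{k-d}-1)$ cancel, and collecting the prefactor exponent $d+k+(d-1)^{2}-1=d^{2}-d+k$ yields
\begin{equation*}
\frac{cb_{1}}{a_{1}}=q^{d^{2}}\cdot q^{k-d}\cdot\frac{(q^{d}-1)(q^{n-k}-1)}{(q^{k}-1)(q^{n-d}-1)}\left[k\atop d\right]\left[n-d\atop d\right]\left[n-d-1\atop k-d-1\right].
\end{equation*}
Now $f$ and $\frac{cb_{1}}{a_{1}}$ carry the identical product $q^{d^{2}}\left[k\atop d\right]\left[n-d\atop d\right]\left[n-d-1\atop k-d-1\right]$, which I factor out.

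Finally I am left with a purely scalar identity: it suffices to check that $2-q^{k-d}\frac{(q^{d}-1)(q^{n-k}-1)}{(q^{k}-1)(q^{n-d}-1)}$ equals $\frac{2q^{k+n-d}-q^{n}-q^{n-d}-q^{k}-q^{k-d}+2}{(q^{k}-1)(q^{n-d}-1)}$. Putting the left side over the denominator $(q^{k}-1)(q^{n-d}-1)$ and expanding the numerator as $2(q^{k}-1)(q^{n-d}-1)-q^{k-d}(q^{d}-1)(q^{n-k}-1)$, the first product gives $2q^{k+n-d}-2q^{k}-2q^{n-d}+2$ and the second gives $q^{n}-q^{k}-q^{n-d}+q^{k-d}$; subtracting produces exactly the claimed numerator. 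I expect no conceptual obstacle here: the entire argument is bookkeeping in Gaussian binomial identities, and the only place demanding genuine care is the exponent arithmetic (tracking $d+k+(d-1)^{2}-1=d^{2}-d+k$ and the splitting $q^{d^{2}-d+k}=q^{d^{2}}q^{k-d}$) together with the correct cancellation of the $(q^{a}-1)$ factors.
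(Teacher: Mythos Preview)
Your proposal is correct and follows essentially the same route as the paper's own proof: both compute $c/a_{1}$, rewrite $b_{1}$ via the same one-step Gaussian binomial identities so that the common factor $q^{d^{2}}\left[k\atop d\right]\left[n-d\atop d\right]\left[n-d-1\atop k-d-1\right]$ can be extracted, and then verify the residual scalar identity $2-q^{k-d}\frac{(q^{d}-1)(q^{n-k}-1)}{(q^{k}-1)(q^{n-d}-1)}=\frac{2q^{k+n-d}-q^{n}-q^{n-d}-q^{k}-q^{k-d}+2}{(q^{k}-1)(q^{n-d}-1)}$. Your write-up is in fact a bit more explicit about the exponent bookkeeping and the cancellations than the paper's, but there is no substantive difference in method.
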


%\section*{Acknowledgements}

%The authors are grateful for the careful reading of two anonymous referees, who not only spotted some minor errors in the original version, but also gave helpful suggestions to improve the manuscript.
\begin{proof}~
We have
\begin{equation*}
\begin{array}{rl}
&~~~~f-\frac{cb_{1}}{a_{1}}\\[.3cm]
&=2q^{d^{2}}\left[k\atop d\right]\left[n-d\atop d\right]\left[n-d-1\atop k-d-1\right]-\frac{\frac{q^{k}-q^{d}}{q^{n}-q^{k}}\left[n-k\atop k\right]\cdot(-q^{k+(d-1)^{2}-1}\left[k-1\atop d-1\right]\left[n-d-1\atop d-1\right]\left[n-d-1\atop k-d\right])}{\frac{1-q^{d}}{q^{n}-q^{k}}\left[n-k\atop k\right]}\\[.3cm]
&=2q^{d^{2}}\left[k\atop d\right]\left[n-d\atop d\right]\left[n-d-1\atop k-d-1\right]-\frac{(q^{k}-q^{d})\cdot(q^{k+(d-1)^{2}-1}\left[k-1\atop d-1\right]\left[n-d-1\atop d-1\right]\left[n-d-1\atop k-d\right])}{q^{d}-1}\\[.3cm]
&=\left[k\atop d\right]\left[n-d\atop d\right]\left[n-d-1\atop k-d-1\right]\left(2q^{d^{2}}-\frac{q^{k+(d-1)^{2}-1}(q^{k}-q^{d})}{q^{d}-1}\cdot\frac{q^{d}-1}{q^{k}-1}\cdot\frac{q^{d}-1}{q^{n-d}-1}\cdot\frac{q^{n-k}-1}{q^{k-d}-1}\right)\\[.3cm]
&=q^{d^{2}}\left[k\atop d\right]\left[n-d\atop d\right]\left[n-d-1\atop k-d-1\right]\left(2-\frac{q^{k-d}(q^{d}-1)(q^{n-k}-1)}{(q^{k}-1)(q^{n-d}-1)}\right)\\[.3cm]
&=\frac{q^{d^{2}}(2q^{k+n-d}-q^{n}-q^{n-d}-q^{k}-q^{k-d}+2)}{(q^{k}-1)(q^{n-d}-1)}\left[k\atop d\right]\left[n-d-1\atop k-d-1\right]\left[n-d\atop d\right].
\end{array}
\end{equation*}
by \eqref{a}, \eqref{b} and \eqref{cf}.
\end{proof}

Let $$[x]_{i}:=\prod\limits_{j=0}^{i-1}(q^{x-j}-1),$$ $$S_{i}(n)=\frac{q^{\binom{i}{2}-ki+k}[k-1]_{i-1}}{[n-k-1]_{i-1}},$$
$$T_{i}(n)=\frac{q^{(k-2d)(i-1)+\binom{i}{2}}[d-1]_{i-1}^{2}[n-k-1]_{i-1}}{[k-1]_{i-1}[n-d-1]_{i-1}^{2}}.$$
We establish the following Lemmas.
\begin{lem}\label{c3}
Let $3\leq i\leq d<k$ and $2k \leq n$. Then
\begin{equation*}
(q^{k}-1)S_{i}(n)-(q^{d}-1)T_{i}(n)<q^{k}-q^{d}
\end{equation*}
\end{lem}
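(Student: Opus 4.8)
The plan is to discard the subtracted term and reduce the surviving term to its worst case $n=2k$. First I would check that $(q^{d}-1)T_{i}(n)>0$: throughout the range $3\le i\le d<k$, $n\ge 2k$, every factor $q^{x-j}-1$ entering the products $[d-1]_{i-1}$, $[k-1]_{i-1}$, $[n-k-1]_{i-1}$, $[n-d-1]_{i-1}$ has exponent at least $1$ (the smallest arguments being $d-i+1$, $k-i+1$, $n-k-i+1\ge 2$ and $n-d-i+1\ge 3$), and the scalar $q^{(k-2d)(i-1)+\binom{i}{2}}$ is positive. Hence $(q^{d}-1)T_{i}(n)>0$, and it is enough to prove the stronger statement $(q^{k}-1)S_{i}(n)<q^{k}-q^{d}$.

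Next I would use monotonicity in $n$. In $S_{i}(n)$ the only $n$-dependence sits in the denominator $[n-k-1]_{i-1}=\prod_{j=0}^{i-2}(q^{n-k-1-j}-1)$, which strictly increases with $n$; thus $S_{i}(n)$ is decreasing in $n$ and $(q^{k}-1)S_{i}(n)\le (q^{k}-1)S_{i}(2k)$ for every $n\ge 2k$. At $n=2k$ the denominator becomes $[k-1]_{i-1}$ and cancels the numerator, leaving the clean value $S_{i}(2k)=q^{\binom{i}{2}-ki+k}$, so that $(q^{k}-1)S_{i}(2k)=(q^{k}-1)\,q^{\binom{i}{2}-ki+k}$.

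Finally I would estimate the exponent. Rewriting $\binom{i}{2}-ki+k=\tfrac12(i-1)(i-2k)=-\tfrac12(i-1)(2k-i)$ and using $3\le i\le d\le k-1$ (hence $k\ge 4$, $i-1\ge 2$, and $2k-i\ge k+1\ge 5$), the exponent is at most $-5$. Therefore
\[
(q^{k}-1)\,q^{\binom{i}{2}-ki+k}\le (q^{k}-1)q^{-5}<q^{k-5}<q^{k-1}\le q^{k}-q^{d},
\]
the last step using $d\le k-1$ and $q\ge 2$. Combining this with the two reductions above yields $(q^{k}-1)S_{i}(n)<q^{k}-q^{d}$, and subtracting the positive quantity $(q^{d}-1)T_{i}(n)$ preserves the strict inequality, which is the claim.

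The step I expect to matter most is conceptual rather than computational: a direct attack would try to control the delicate difference $(q^{k}-1)S_{i}(n)-(q^{d}-1)T_{i}(n)$ through the near-cancellation of the two terms, which is messy. The key realization that makes the proof short is that, once $i\ge 3$ and $n\ge 2k$, the term $S_{i}(n)$ is already exponentially small (its value at the extreme point $n=2k$ carries an exponent $\le -5$), so the subtracted term can simply be dropped and never has to be compared with $S_{i}$. The only genuine bookkeeping is verifying the index ranges that guarantee $T_{i}(n)\ge 0$ together with the exponent bound, both of which are routine.
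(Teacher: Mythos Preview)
Your proof is correct. Every step checks out: the positivity of $T_i(n)$ follows from the index bounds you listed, the monotonicity of $S_i(n)$ in $n$ is immediate since only the denominator $[n-k-1]_{i-1}$ depends on $n$, the cancellation at $n=2k$ is exact, and the exponent estimate $\binom{i}{2}-ki+k=-\tfrac12(i-1)(2k-i)\le -5$ together with $q^{k-1}\le q^{k-1}(q-1)\le q^k-q^d$ finishes it.

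Your route differs from the paper's in a meaningful way. The paper introduces auxiliary sequences $\alpha_i=S_i(n)$ and $\beta_i$ with $T_i(n)=\beta_i^2/\alpha_i$, rewrites the left-hand side as $(q^k-q^d)\alpha_i+(q^d-1)(\alpha_i-\beta_i)(1+\beta_i/\alpha_i)$, and then proves three separate monotonicity statements in $i$ (that $\alpha_i$, $\beta_i/\alpha_i$, and $\alpha_i-\beta_i$ are all decreasing in $i$) in order to reduce to the single case $i=3$; only there does it drop $T_3$ and bound $(q^k-1)S_3(n)$. You bypass all of this by dropping $T_i$ at the outset and handling every $i\ge3$ uniformly via the extremal choice $n=2k$. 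This is shorter and more transparent: the paper's monotonicity-in-$i$ machinery is ultimately unnecessary, since $(q^k-1)S_i(n)$ is already small enough on its own once $i\ge3$. The paper's decomposition would become relevant only if one needed the inequality for $i=2$ (where $S_i$ alone is not small), but that case is handled separately elsewhere.
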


\begin{proof}~
For fixed $k$, $d$ and $n$, let
\begin{equation*}
\alpha_{i}=\frac{q^{\binom{i}{2}-ki+k}[k-1]_{i-1}}{[n-k-1]_{i-1}},
\end{equation*}

\begin{equation*}
\beta_{i}=\frac{q^{\binom{i}{2}-di+d}[d-1]_{i-1}}{[n-d-1]_{i-1}}.
\end{equation*}
Then $S_{i}(n)=\alpha_{i}$, $T_{i}(n)=\frac{\beta_{i}^{2}}{\alpha_{i}}$.
Note that for $3\leq i\leq d<k$,
\begin{equation}\label{e16}
\frac{\alpha_{i+1}}{\alpha_{i}}=\frac{1-q^{i-k}}{q^{n-k-i}-1}<1.
\end{equation}
Since $\frac{\beta_{i+1}}{\beta_{i}}\cdot \frac{\alpha_{i}}{\alpha_{i+1}}=\frac{(1-q^{i-d})(q^{n-k-i}-1)}{(q^{n-d-i}-1)(1-q^{i-k})}<\frac{q^{n-k-i}-1}{q^{n-d-i}-1}<1$,
\begin{equation}\label{e19}
\frac{\beta_{i+1}}{\alpha_{i+1}}<\frac{\beta_{i}}{\alpha_{i}}.
\end{equation}
Since $[k-1]_{i-1}>q^{(k-d)(i-1)}[d-1]_{i-1}$, $[n-d-1]_{i}>q^{(k-d)i}[n-k-1]_{i}$ and $3\leq i\leq d<k$,
\begin{equation*}
\begin{array}{rl}
\alpha_{i}-\alpha_{i+1}\!\!\!\!&=\left(1-\frac{\alpha_{i+1}}{\alpha_{i}}\right)\alpha_{i}\\[.3cm]
&\!\!\overset{\eqref{e16}}{=}\left(1-\frac{1-q^{i-k}}{q^{n-k-i}-1}\right)\alpha_{i}\\[.3cm]
&=\frac{q^{n-k-i}+q^{i-k}-2}{q^{n-k-i}-1}\alpha_{i}\\[.3cm]
&=\frac{q^{n-k-i}+q^{i-k}-2}{q^{n-k-i}-1}\cdot\frac{q^{\binom{i}{2}-ki+k}[k-1]_{i-1}}{[n-k-1]_{i-1}}\\[.3cm]
&=\frac{q^{\binom{i}{2}-ki+k}[k-1]_{i-1}(q^{n-k-i}+q^{i-k}-2)}{[n-k-1]_{i}}\\[.3cm]
&>\frac{q^{\binom{i}{2}-di+d}[d-1]_{i-1}(q^{n-k-i}+q^{i-k}-2)}{[n-k-1]_{i}}\\[.3cm]
&>\frac{q^{\binom{i}{2}-di+d}[d-1]_{i-1}(q^{n-k-i}+q^{i-k}-2)\cdot q^{(k-d)i}}{[n-d-1]_{i}}\\[.3cm]
&>\frac{q^{\binom{i}{2}-di+d}[d-1]_{i-1}(q^{n-d-i}+q^{i-d}-2)}{[n-d-1]_{i}}\\[.3cm]
&=\left(1-\frac{1-q^{i-d}}{q^{n-d-i}-1}\right)\beta_{i}\\[.3cm]
&=\left(1-\frac{\beta_{i+1}}{\beta_{i}}\right)\beta_{i}\\[.3cm]
&=\beta_{i}-\beta_{i+1}.
\end{array}
\end{equation*}
Then
\begin{equation}\label{e20}
\alpha_{i}-\beta_{i}>\alpha_{i+1}-\beta_{i+1}.
\end{equation}
Since
\begin{equation*}
\begin{array}{rl}
(q^{k}-1)S_{i}(n)-(q^{d}-1)T_{i}(n)\!\!\!\!&=(q^{k}-q^{d})S_{i}(n)+(q^{d}-1)\left(S_{i}(n)-T_{i}(n)\right)\\[.3cm]
&=(q^{k}-q^{d})\alpha_{i}+(q^{d}-1)\left(\alpha_{i}-\frac{\beta_{i}^{2}}{\alpha_{i}}\right)\\[.3cm]
&=(q^{k}-q^{d})\alpha_{i}+(q^{d}-1)(\alpha_{i}-\beta_{i})(1+\frac{\beta_{i}}{\alpha_{i}})\\[.3cm]
\end{array}
\end{equation*}
and note \eqref{e16}, \eqref{e19} and \eqref{e20}, it suffices to check the case $i=3$. %We have
%\begin{equation*}
%\begin{array}{rl}
%&~~~~(q^{k}-1)S_{i}(n)-(q^{d}-1)T_{i}(n)<q^{k}-q^{d}\\[.3cm]
%&\Leftrightarrow (q^{k}-1)\alpha_{3}-(q^{d}-1)\frac{\beta_{3}^{2}}{\alpha_{3}}<q^{k}-q^{d}\\[.3cm]
%&\Leftrightarrow (q^{k}-1)\alpha_{3}-(q^{k}-q^{d})\alpha_{3}-(q^{d}-1)\frac{\beta_{3}^{2}}{\alpha_{3}}<(q^{k}-q^{d})(1-\alpha_{3})\\[.3cm]
%&\Leftrightarrow (q^{d}-1)\alpha_{3}-(q^{d}-1)\frac{\beta_{3}^{2}}{\alpha_{3}}<(q^{k}-q^{d})(1-\alpha_{3})\\[.3cm]
%&\Leftrightarrow (q^{d}-1)\left(\alpha_{3}-\frac{\beta_{3}^{2}}{\alpha_{3}}\right)<(q^{k}-q^{d})(1-\alpha_{3})\\[.3cm]
%&\Leftrightarrow (q^{d}-1)\frac{\alpha_{3}^{2}-\beta_{3}^{2}}{\alpha_{3}}<(q^{k}-q^{d})(1-\alpha_{3})\\[.3cm]
%&\Leftrightarrow (q^{d}-1)(\alpha_{3}-\beta_{3})\frac{\alpha_{3}+\beta_{3}}{\alpha_{3}}<(q^{k}-q^{d})(1-\alpha_{3})\\[.3cm]
%&\Leftrightarrow (q^{d}-1)(\alpha_{3}-\beta_{3})\left(1+\frac{\beta_{3}}{\alpha_{3}}\right)<(q^{k}-q^{d})(1-\alpha_{3})\\[.3cm]
%\end{array}
%\end{equation*}
Since
%\begin{equation*}
%\begin{array}{rl}
%1-\alpha_{3}=1-\frac{q^{3-2k}[k-1]_{2}}{[n-k-1]_{2}}=1-\frac{q^{3-2k}(q^{k-1}-1)(q^{k-2}-1)}{(q^{n-k-1}-1)(q^{n-k-2}-1)}
%\end{array}
%\end{equation*}
\begin{equation*}
\begin{array}{rl}
(q^{k}-1)S_{3}(n)-(q^{d}-1)T_{3}(n)\!\!\!\!&<(q^{k}-1)S_{3}(n)\\[.3cm]
&=(q^{k}-1)\cdot\frac{q^{3-2k}[k-1]_{2}}{[n-k-1]_{2}}\\[.3cm]
&=\frac{q^{3-2k}(q^{k}-1)(q^{k-1}-1)(q^{k-2}-1)}{(q^{n-k-1}-1)(q^{n-k-2}-1)}\\[.3cm]
&\leq q^{3-2k}(q^{k}-1)\\[.3cm]
&< q^{k}-q^{d},
\end{array}
\end{equation*}
the result holds.
\end{proof}

\begin{lem}\label{c4}
Let $2\leq i\leq d<k$, $2k\leq n$ and $a_{i}, b_{i}$ be defined in \eqref{a} and \eqref{b} respectively. Then
\begin{equation*}
b_{i}<\frac{a_{i}b_{1}}{a_{1}}.
\end{equation*}
\end{lem}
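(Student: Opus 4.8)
The plan is to pass from the projections to the scalar ratios $b_i/b_1$ and $\lambda_i/\lambda_1$, where $\lambda_i=(-1)^iq^{\binom{i}{2}-ki}\left[n-k-i\atop k-i\right]$ is the $q$-Kneser eigenvalue of \eqref{e2}, so that $a_i=c+\lambda_i$ with $c$ as in \eqref{cf}. Since $a_1<0$, the target inequality $b_i<\frac{a_ib_1}{a_1}$ is equivalent to $a_1b_i-a_ib_1>0$. First I would record three identities, writing $[x]_i=\prod_{j=0}^{i-1}(q^{x-j}-1)$ as before. Repeated cancellation of Gaussian coefficients gives the telescoping $\frac{\left[n-k-i\atop k-i\right]}{\left[n-k-1\atop k-1\right]}=\frac{[k-1]_{i-1}}{[n-k-1]_{i-1}}$, whence $\lambda_i/\lambda_1=(-1)^{i-1}S_i(n)$; the relation $\left[n-k\atop k\right]=\frac{q^{n-k}-1}{q^k-1}\left[n-k-1\atop k-1\right]$ yields $c=-\lambda_1\frac{q^k-q^d}{q^k-1}$; and expanding the three quotients in $b_i/b_1$ produces the $q$-power $q^{(k-2d)(i-1)+\binom{i}{2}}$ together with a leftover factor $T_i(n)\cdot\frac{[n-d-1]_{i-1}[n-d-i]_{k-d}}{[n-k-1]_{i-1}[n-d-1]_{k-d}}$. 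This leftover quotient collapses to $1$ because its numerator and denominator are both the product of $q^x-1$ over the same contiguous exponent range $x=n-k-i+1,\dots,n-d-1$, so $b_i/b_1=(-1)^{i-1}T_i(n)$.

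With these identities in hand, I would substitute $b_i=(-1)^{i-1}T_ib_1$, $\lambda_i=(-1)^{i-1}S_i\lambda_1$ and $c=-\lambda_1\frac{q^k-q^d}{q^k-1}$ into $a_1b_i-a_ib_1$ and factor out $\frac{b_1\lambda_1}{q^k-1}$, which is strictly positive since $b_1,\lambda_1<0$. The whole claim then reduces to the single scalar inequality
\[(-1)^i\big((q^k-1)S_i(n)-(q^d-1)T_i(n)\big)+(q^k-q^d)>0.\]
This is the hinge of the argument, and it is handled by a parity split.

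For odd $i$ (necessarily $i\ge 3$), the displayed inequality reads exactly $(q^k-1)S_i(n)-(q^d-1)T_i(n)<q^k-q^d$, which is precisely Lemma~\ref{c3}. For even $i$ (including $i=2$) it becomes $(q^d-1)T_i(n)-(q^k-1)S_i(n)<q^k-q^d$, and here the crude bound $T_i(n)<S_i(n)$ already suffices: using $q^d-1>0$ and $q^d<q^k$ one gets $(q^d-1)T_i-(q^k-1)S_i<(q^d-q^k)S_i<0<q^k-q^d$. To prove $T_i<S_i$ I would invoke $T_i(n)=\beta_i^2/\alpha_i$ with $\alpha_i=S_i(n)$ and $\beta_i=\frac{q^{\binom{i}{2}-di+d}[d-1]_{i-1}}{[n-d-1]_{i-1}}$ (as in the proof of Lemma~\ref{c3}), so it is enough to show $\beta_i<\alpha_i$. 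This follows from $\frac{\beta_i}{\alpha_i}=q^{(k-d)(i-1)}\cdot\frac{[d-1]_{i-1}}{[k-1]_{i-1}}\cdot\frac{[n-k-1]_{i-1}}{[n-d-1]_{i-1}}$, since $\frac{[d-1]_{i-1}}{[k-1]_{i-1}}<q^{-(k-d)(i-1)}$ and $\frac{[n-k-1]_{i-1}}{[n-d-1]_{i-1}}<1$ term by term (each factor has a strictly smaller exponent in the numerator because $k>d$); multiplying through gives $\beta_i/\alpha_i<1$, hence $T_i=\beta_i^2/\alpha_i<\beta_i<\alpha_i=S_i$.

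The main obstacle is the bookkeeping that establishes $b_i/b_1=(-1)^{i-1}T_i(n)$, in particular recognizing the telescoping collapse of $\frac{[n-d-1]_{i-1}[n-d-i]_{k-d}}{[n-k-1]_{i-1}[n-d-1]_{k-d}}$ to $1$ and checking that the accumulated $q$-powers assemble into precisely the exponent of $T_i(n)$. Once the problem is reduced to the displayed scalar inequality, the genuinely delicate odd case is exactly the content of Lemma~\ref{c3} (whose monotonicity-and-reduction-to-$i=3$ argument does the real work), while the even case is dispatched by the soft estimate $T_i<S_i$.
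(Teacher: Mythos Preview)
Your proposal is correct and follows essentially the same approach as the paper: both reduce $b_i<\frac{a_ib_1}{a_1}$ to the scalar inequality $(-1)^i\big((q^d-1)T_i(n)-(q^k-1)S_i(n)\big)<q^k-q^d$, then dispatch the odd case by Lemma~\ref{c3} and the even case by $S_i(n)>T_i(n)$. The only cosmetic difference is that you reach the scalar inequality by factoring $a_1b_i-a_ib_1$ through $\lambda_i$ and $c$, whereas the paper simplifies the quotients $\frac{b_i}{-b_1}$ and $\frac{a_i}{-a_1}$ directly.
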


\begin{proof}~Since $b_{1}=-q^{k+(d-1)^{2}-1}\left[k-1\atop d-1\right]\left[n-d-1\atop d-1\right]\left[n-d-1\atop k-d\right]<0$ and $a_{1}=\frac{1-q^{d}}{q^{n}-q^{k}}\left[n-k\atop k\right]$,
\begin{equation*}
\begin{array}{rl}
&~~~~b_{i}<\frac{a_{i}b_{1}}{a_{1}}\\[.3cm]
&\Leftrightarrow\frac{b_{i}}{-b_{1}}<\frac{a_{i}}{-a_{1}}\\[.3cm]
&\Leftrightarrow\frac{(-1)^{i}q^{ki+(d-i)^{2}-\binom{i+1}{2}}\left[k-i\atop d-i\right]\left[n-d-i\atop d-i\right]\left[n-d-i\atop k-d\right]}{q^{k+(d-1)^{2}-1}\left[k-1\atop d-1\right]\left[n-d-1\atop d-1\right]\left[n-d-1\atop k-d\right]}<\frac{\frac{q^{k}-q^{d}}{q^{n}-q^{k}}\left[n-k\atop k\right]+(-1)^{i}q^{\binom{i}{2}-ki}\left[n-k-i\atop k-i\right]}{\frac{q^{d}-1}{q^{n}-q^{k}}\left[n-k\atop k\right]}\\[.3cm]
&\Leftrightarrow\frac{(-1)^{i}q^{(k-2d)(i-1)+\binom{i}{2}}\left[k-i\atop d-i\right]\left[n-d-i\atop d-i\right]\left[n-d-i\atop k-d\right]}{\left[k-1\atop d-1\right]\left[n-d-1\atop d-1\right]\left[n-d-1\atop k-d\right]}<\frac{(q^{k}-q^{d})+(-1)^{i}q^{\binom{i}{2}-ki}(q^{n}-q^{k})\frac{\left[n-k-i\atop k-i\right]}{\left[n-k\atop k\right]}}{q^{d}-1}\\[.3cm]
&\Leftrightarrow\frac{(-1)^{i}q^{(k-2d)(i-1)+\binom{i}{2}}[d-1]_{i-1}^{2}[n-k-1]_{i-1}}{[k-1]_{i-1}[n-d-1]_{i-1}^{2}}<\frac{(q^{k}-q^{d})+(-1)^{i}q^{\binom{i}{2}-ki+k}(q^{k}-1)\frac{[k-1]_{i-1}}{[n-k-1]_{i-1}}}{q^{d}-1}\\[.3cm]
&\Leftrightarrow (-1)^{i}T_{i}(n)<\frac{(q^{k}-q^{d})+(-1)^{i}(q^{k}-1)S_{i}(n)}{q^{d}-1}\\[.3cm]
&\Leftrightarrow (-1)^{i}\left((q^{d}-1)T_{i}(n)-(q^{k}-1)S_{i}(n)\right)<q^{k}-q^{d}.
\end{array}
\end{equation*}

We can distinguish the cases when $i$ is odd and even. Let us consider the first case when $i$ is even.
Since $d<k$, $2k\leq n$ and $\frac{q^{s}-1}{q^{t}-1}> q^{s-t}$ for $s> t$.
\begin{equation*}
\begin{array}{rl}
\frac{S_{i}(n)}{T_{i}(n)}\!\!\!\!&=\frac{[k-1]_{i-1}^{2}[n-d-1]_{i-1}^{2}}{q^{(2k-2d)(i-1)}[d-1]_{i-1}^{2}[n-k-1]_{i-1}^{2}}\\[.3cm]
&>\frac{[n-d-1]_{i-1}^{2}}{[n-k-1]_{i-1}^{2}}\\[.3cm]
&>1,
\end{array}
\end{equation*}
that is, $S_{i}(n)>T_{i}(n)$. Hence,
$$(q^{d}-1)T_{i}(n)-(q^{k}-1)S_{i}(n)<(q^{d}-1)S_{i}(n)-(q^{k}-1)S_{i}(n)=(q^{d}-q^{k})S_{i}(n)<0<q^{k}-q^{d},$$
which implies the desired result.

Now suppose $i$ is odd and thus $i\geq3$. Then
\begin{equation*}
b_{i}<\frac{a_{i}b_{1}}{a_{1}}\Leftrightarrow (q^{k}-1)S_{i}(n)-(q^{d}-1)T_{i}(n)<q^{k}-q^{d}.
\end{equation*}
Hence the result holds by Lemma \ref{c3}.
\end{proof}

\end{document}